\newtheorem{theorem}{Theorem}
\newtheorem{proposition}[theorem]{Proposition}
\newtheorem{lemma}[theorem]{Lemma}
\newtheorem{corollary}[theorem]{Corollary}
\theoremstyle{definition}
\newtheorem{problem}[theorem]{Problem}
\newtheorem*{acknowledgements}{Acknowledgements}
\newtheorem*{conjecture}{Conjecture}
\numberwithin{equation}{section}
\newcommand\vanish[1]{}	% \vanish{TEXT} hides text in the compiled file.
\newcommand\ourcomment[1]{ \textbf{[#1]} }
\newcommand\oc\ourcomment
\begin{document}

\title{Some problems on induced subgraphs}
\author{Vaidy Sivaraman}

\address{Department of Mathematical Sciences, Binghamton University.}
\email{vaidy@math.binghamton.edu}

\begin{abstract}

We discuss some problems related to induced subgraphs. 
The first problem is about getting a good upper bound for the chromatic number in terms of the clique number for graphs in which every induced cycle has length $3$ or $4$. 
The second problem is about the perfect chromatic number of a graph, which is the smallest number of perfect sets into which the vertex set of a graph can be partitioned. (A set of vertices is said to be perfect it it induces a perfect graph.) 
The third problem is on antichains in the induced subgraph ordering. The fourth problem is on graphs in which the difference between the chromatic number and the clique number is at most one for every induced subgraph of the graph.
The fifth problem is on a weakening of the notorious Erd\H{o}s-Hajnal conjecture. 
The last problem is on a conjecture of Gy\'{a}rf\'{a}s about $\chi$-boundedness of a particular class of graphs. 
\end{abstract}

%\keywords{minor-perfect graph, perfect graph, forbidden induced subgraph, claw-free graph, line graph, chordal graph}

%\subjclass[2010]{Primary 05C75; Secondary 05C07, 05C17}
%05C99 None of the above, but in graph theory
%05C75 Structural characterization of families of graphs 
%05C07 Vertex degrees 
%05C10 Planar graphs; geometric and topological aspects of graph theory
%05C76 Graph operations (line graphs, products, etc.)

\date{\today}

\maketitle

%\tableofcontents

%\keywords{hole, chi-boundedness, chromatic number, clique number, chordal graph, perfect graph}

%\subjclass[2010]{Primary 05C75; Secondary 05C07, 05C17}
%05C99 None of the above, but in graph theory
%05C75 Structural characterization of families of graphs 
%05C07 Vertex degrees 
%05C10 Planar graphs; geometric and topological aspects of graph theory
%05C76 Graph operations (line graphs, products, etc.)

\date{\today}

\maketitle

\begin{section}
{Introduction}
All graphs considered in this article are finite and simple.  Let $G$ be a graph. A graph that can be obtained from $G$ by deleting some of its vertices is called an induced subgraph of $G$. A hole in a graph is an induced cycle of length at least $4$. 
There are several notions of containment for graphs, and induced subgraph is the strongest one.  A class of graphs is said to be hereditary if every induced subgraph of every graph in the class is also in the class.  The chromatic number of a graph $G$ is denoted by $\chi(G)$ and the clique number by $\omega(G)$. 
\end{section}
\begin{section}
{Graphs in which every induced cycle is a triangle or a square}
%\tableofcontents

A graph in which every induced cycle is a triangle is called a chordal graph. (There are several other names, like triangulated graph, rigid circuit graph, and perfect elimination graph.) 
A graph in which every induced cycle is a square is called a chordal bipartite graph. 
Both classes are perfect: Chordal graphs was the first interesting class proved to be perfect in the late 1950s, and chordal bipartite graphs are bipartite, and hence perfect. 
What if every cycle in a graph is either a triangle or a square. Such graphs need not be perfect.  The complement of a $7$-cycle is an example. 

We prove that the class of graphs not containing holes of length at least $5$ is $\chi$-bounded by the function $f(x) = 2^{2^x}$. The proof uses the levelling argument and several ideas from the recent paper by Scott and Seymour \cite{SS1}. 
The main point here is that since we are forbidding all holes of length at least $5$, we can bypass their ``spine", ``parent rule", and ``parity property". Also, it gives a slightly better bound, although not in the final $\chi$-bounding function.

A levelling $L$ is a sequence of disjoint subsets of vertices $(L_0, L_1, \cdots, L_k)$ such that $|L_0|=1$ and every vertex in $L_i$ has a neighbor in $L_{i-1}$, and no vertex in $L_i$ has a neighbor in $L_j$ for $j < i-1$.

\begin{lemma}\label{MAINLEMMA}
Suppose that every graph with no holes of length at least $5$ and clique number at most $\omega-1$ has chromatic number at most $n$. Let $G$ be a graph with no holes of length at least $5$ and clique number $\omega$. Then $\chi(G) \leq 4n^2$. 
\end{lemma}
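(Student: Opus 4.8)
The plan is to combine the inductive hypothesis with a breadth-first-search levelling, in the spirit of Scott and Seymour, using the fact that \emph{every} hole of length at least $5$ is forbidden (not only the long ones) to keep things elementary. The basic observation is that for any vertex $v$, the graph $G[N(v)]$ is an induced subgraph of $G$, hence has no hole of length at least $5$, and any clique of $G[N(v)]$ together with $v$ is a clique of $G$, so $\omega(G[N(v)])\le\omega-1$; by hypothesis, $\chi(G[N(v)])\le n$.

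Now a standard reduction. We may assume $G$ is connected; fix $v_0$ and let $(L_0,\dots,L_k)$ be the levelling by distance from $v_0$. There are no edges of $G$ between $L_i$ and $L_j$ when $|i-j|\ge 2$, so the subgraphs induced on the even-indexed levels and on the odd-indexed levels are each disjoint unions of the $G[L_i]$; hence $\chi(G)\le 2\max_i\chi(G[L_i])$, and it suffices to show $\chi(G[L_i])\le 2n^2$ for every $i$. Fix $i\ge 1$, write $H=G[L_i]$ and $M=L_{i-1}$, and note every vertex of $H$ has a neighbour in $M$. The basic observation applied inside $H$ gives $\omega(H[N_H(x)])\le\omega(H)-1\le\omega-1$, hence $\chi(H[N_H(x)])\le n$, for every $x\in L_i$: inside a level, neighbourhoods are still well coloured.

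To upgrade this local bound to a global bound on $\chi(H)$, let $T$ be the BFS tree, let $p(x)\in M$ be the $T$-parent of $x\in L_i$, and partition $L_i$ into the parent classes $C_w=\{x\in L_i:p(x)=w\}$. Each $C_w\subseteq N(w)$, so $\chi(G[C_w])\le n$. The aim is to properly colour the quotient pattern of the classes with $2n$ colours, i.e.\ to find $\gamma$ on $\{w:C_w\ne\emptyset\}$ with $\gamma(w)\ne\gamma(w')$ whenever some edge of $H$ joins $C_w$ to $C_{w'}$; then colouring $x\in C_w$ by the pair $(\gamma(w),\delta_w(x))$, where $\delta_w$ is a proper $n$-colouring of $G[C_w]$, is a proper colouring of $H$ with $2n\cdot n=2n^2$ colours, and $\chi(G)\le 4n^2$ follows.

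The engine for controlling the classes is the hole exclusion. Suppose $x\in C_w$, $y\in C_{w'}$, $x\sim y$, $w\ne w'$, and moreover $w\not\sim w'$, $x\not\sim w'$ and $y\not\sim w$. Then $w,x,y,w'$ is an induced path with both ends in $M$, and appending the two $T$-paths from $w$ and from $w'$ up to their common ancestor yields a cycle of length at least $5$ (the ends $w,w'$ lie at level $i-1$, so their common ancestor is at level at most $i-2$ and both branches are nonempty; the branches lie in levels below $i$, where they are chordless since non-consecutive levels are non-adjacent, and they are nonadjacent to $x$ and $y$ except for the edges $wx$ and $w'y$, because a vertex of $L_i$ has no neighbour below level $i-1$). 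A routine shortening removes any remaining chords, producing an induced hole of length at least $5$, contradicting the hypothesis. Hence every edge of $H$ between distinct classes forces $w\sim w'$, or a vertex of $C_w$ adjacent to $w'$, or a vertex of $C_{w'}$ adjacent to $w$: each quotient edge $ww'$ carries a witness inside $G[M]$, inside $N(w')\cap L_i$, or inside $N(w)\cap L_i$. The main obstacle --- the part I expect to be genuinely delicate --- is to convert this witness structure into a $2n$-colouring of the quotient pattern: a priori $M$ has no better chromatic bound than the one we are proving, so the colouring must be read off from the witnesses rather than from $G[M]$ itself, and finding the right organisation of them (perhaps via a cleverer quotient, or a cleverer choice of the classes $C_w$) is the crux. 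The remaining ingredients --- the reduction, the clique-number drop inside a level, and the product colouring --- are routine.
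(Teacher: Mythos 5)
Your skeleton (the distance levelling, the odd/even reduction to a single level, the observation that clique number drops inside any neighbourhood, and a $2n\cdot n$ bound per level) matches the paper's, but the proposal stops exactly at the step that carries the whole proof: you never produce the $2n$-colouring, and the organisation you set up for it --- BFS-parent classes $C_w$ together with a proper colouring of their quotient pattern --- is not the one that works. There is no apparent reason the quotient of parent classes should be $2n$-colourable, and the paper never colours any such quotient; so as written the argument has a genuine hole at its crux, which you yourself flag.

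What the paper does instead is two things, both different from your plan. First, it bounds $\chi(G[L_{k-1}])$ by $2n$ directly, using the level below \emph{and} the level above: after restricting to a connected component of $L_k$ and pruning the earlier levels so that every vertex there has a child of which it is the only parent, fix $x\in L_{k-2}$ and a child $y\in L_{k-1}$ whose only parent is $x$, and show that every vertex of $L_{k-1}$ nonadjacent to $y$ is adjacent to $x$: if $z$ were nonadjacent to both, a shortest path from a parent of $z$ to $x$ through the lower levels, concatenated with a shortest path from $y$ to $z$ whose interior lies in $L_k$ (this is where the connectedness of $L_k$ and the pruning are used, so that $y$ and $z$ have children), closes into a hole of length at least $5$. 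Hence $L_{k-1}$ is covered by $N(y)$ and $N(x)$, each inducing clique number at most $\omega-1$, so $\chi(L_{k-1})\le 2n$. Second, $L_k$ is partitioned not by parent but by the \emph{least colour} $i$ of a neighbour in $L_{k-1}$; for a clique $K$ in the class $A_i$, if $u,v\in K$ had colour-$i$ parents $u'$ not adjacent to $v$ and $v'$ not adjacent to $u$, then $u'v'$ is a nonedge (equal colours) and $u$-$v$-$v'$-$P$-$u'$-$u$, with $P$ a shortest path through the lower levels, is a hole of length at least $5$; so the colour-$i$ parent sets of the vertices of $K$ form a chain, some colour-$i$ vertex is adjacent to all of $K$, and $\omega(A_i)\le\omega-1$, whence $\chi(A_i)\le n$ and $\chi(L_k)\le 2n\cdot n=2n^2$. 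In particular, your worry that ``$M$ has no better chromatic bound than the one we are proving'' is resolved not by reading a colouring off the witnesses you collected, but by this separate $2n$ bound on the previous level; and the product colouring is replaced by applying the hypothesis directly to each class $A_i$, whose clique number genuinely drops. These two ideas are the missing content of the proof.
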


\begin{proof} 

We may assume that $G$ is connected. Let $v \in V(G)$. Let $L_i$ be the set of vertices at distance $i$ from $v$.  The proof is by analyzing this levelling, in particular, looking at a level and previous two levels. We show that for every $k$, $\chi(L_k) \leq 2n^2$. This is trivially true for $k=0$. Consider $k=1$. In fact, $L_1$ is the set of neighbors of the vertex in $L_0$, and hence has clique number at most $\omega-1$, and hence chromatic number at most $n$. Now let $k \geq 2$. We will show how to color the vertices in $L_k$ with $2n^2$ colors. Since we can use the same set of colors for each component of $L_k$, we may assume that $L_k$ is connected. By deleting vertices in $L_0, L_1, \cdots ,L_{k-1}$  that don't have a child for which it is the only parent, we may assume every vertex in $L_0, L_1, \cdots, L_{k-1}$ has a child for which it is the only parent. Let $x \in L_{k-2}$. Let $y$ be a child of $x$ such that $y$ is its only parent. We partition $L_{k-1} - \{y\}$ into $A,B$ where $A$ is the set of vertices in $L_{k-1}$ that are neighbors of $y$, and $B = L_{k-1} - A - \{y\}$. Note that $A$ has clique number at most $\omega-1$, and hence chromatic number at most $n$. Suppose there is a vertex  $z \in B$ that is not adjacent to $x$. Let $a$ be a parent of $z$. Now there is a path between $a$ and $x$ with interior in $L_0 \cup L_1 \cup \cdots \cup L_{k-2}$, let $P$ be a shortest such path. Also, there is a path between $y$ and $z$ with interior in $L_k $ (this is guaranteed by the connectedness of $L_k$), let $P'$ be a shortest such path. Now $a-P-x-y-P'-z-a$ is a hole of length at least $5$, which is impossible. Hence we conclude that  $x$ is adjacent to every vertex in $B$.  Hence $B \cup \{y\}$ has clique number at most $\omega-1$, and chromatic number at most $n$. By using different colors for $A$ and $B \cup \{y\}$, we see that $L_{k-1}$ can be colored with at most $2n$ colors. Now partition $L_k$ into sets $A_1, \cdots, A_{2n}$ as follows:   A vertex is in $A_i$ if $i$ is the smallest of the colors of its neighbors in $L_{k-1}$. 

We will show that each $A_i$ has clique number at most $\omega-1$. Let $K$ be a clique in some $A_i$. Suppose there exist $u,v \in K$ such that each has a parent of color $i$ that is not a parent of the other (say $u'$ is a parent of $u$ but not $v$, $v'$ is a parent of $v$ but not $u$. Note that $uv$ is an edge (they belong to a clique) and $u'v'$ is a non-edge (both $u'$ and $v'$ received the same color in a proper coloring). Let $P$ be a shortest path between $u'$ and $v'$ with interior vertices in $L_0 \cup \cdots \cup L_{k-2}$. Now $u-v-v'-P-u'-u$ is a hole of length at least $5$. Hence for any two vertices $u,v \in K$ every parent of $u$ of color $i$ is also a parent of $v$ (or vice versa). Thus the set of parents of color $i$ of vertices in $K$ form a chain, and hence there must be a vertex of color $i$ adjacent to every vertex in $K$. Hence $|K| \leq \omega - 1$. Thus each $A_i$ has clique number at most $\omega-1$, and hence chromatic number at most $n$. By using different set of colors for different $A_i$, we conclude that $\chi(L_k) \leq 2n^2$. By using one set of colors for odd levels and another set for even levels, we conclude that $\chi(G) \leq 4n^2$.
\end{proof}

\begin{theorem}
Let $G$ be a graph with no holes of length at least $5$. Then $\chi(G) \leq 2^{2^{\omega(G)}}$. 
\end{theorem}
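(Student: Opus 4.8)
The plan is to induct on the clique number, feeding Lemma~\ref{MAINLEMMA} into itself. The recursion it supplies, $n \mapsto 4n^2$, is slightly lossy, so a naive induction using the bound in the theorem statement does not close: from $\chi \le 2^{2^{\omega-1}}$ one only gets $\chi \le 4\bigl(2^{2^{\omega-1}}\bigr)^2 = 2^{2^{\omega}+2}$, which overshoots. I would therefore prove by induction on $\omega(G)$ the slightly stronger assertion that every graph $G$ with no hole of length at least $5$ satisfies $\chi(G) \le 2^{\,2^{\omega(G)}-2}$; since $2^{\,2^{\omega}-2}\le 2^{\,2^{\omega}}$, the theorem is an immediate consequence.

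For the base case I would take $\omega(G)\le 1$: then $G$ has no edge, so $\chi(G)\le 1 = 2^{\,2^{1}-2}$. For the inductive step, let $\omega(G)=\omega\ge 2$ and suppose that every graph with no hole of length at least $5$ and clique number at most $\omega-1$ has chromatic number at most $n:=2^{\,2^{\omega-1}-2}$. Lemma~\ref{MAINLEMMA} then applies with this $n$ and gives
\[
  \chi(G)\ \le\ 4n^2\ =\ 4\bigl(2^{\,2^{\omega-1}-2}\bigr)^{2}\ =\ 2^{2}\cdot 2^{\,2^{\omega}-4}\ =\ 2^{\,2^{\omega}-2},
\]
which is exactly the strengthened bound, completing the induction.

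The only thing that needs care is this exponent bookkeeping: the constant $-2$ in the inductive hypothesis is tuned precisely so that the factor $4$ and the squaring in $4n^2$ leave it invariant, so the induction is in fact tight at every level (one checks $4\cdot 1^2=4$, $4\cdot 4^2=64$, $4\cdot 64^2=16384$, matching $2^{\,2^{\omega}-2}$ for $\omega=2,3,4$). Everything else follows directly from Lemma~\ref{MAINLEMMA}, which is already phrased for the whole hereditary class of graphs with no hole of length at least $5$, so there is no separate verification needed that the inductive hypothesis transfers to the relevant subgraphs. I do not expect any genuine obstacle beyond getting this arithmetic to close, which is why strengthening the statement before inducting is the crucial move.
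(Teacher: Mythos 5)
Your proposal is correct and is essentially the paper's own argument: the paper likewise strengthens the statement to $\chi(G)\le \tfrac14\,2^{2^{\omega(G)}}$ (which is exactly your $2^{2^{\omega(G)}-2}$), inducts on $\omega$ with base case $\omega=1$, and closes the induction via Lemma~\ref{MAINLEMMA} through the same computation $4\bigl(\tfrac14 2^{2^{k}}\bigr)^2=\tfrac14 2^{2^{k+1}}$. Your exponent bookkeeping matches the paper's constant $\tfrac14$ precisely, so there is nothing to add.
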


\begin{proof} We claim that if $G$ is a graph with no holes of length at least $5$, then $\chi(G) \leq (\frac{1}{4}) 2^{2^{\omega(G)}}$. The proof is by induction on $\omega(G)$. The base case $\omega = 1$ is trivial. Suppose the statement is true for some $k$ i.e., every graph with no holes of length at least $5$ and clique number $k$ has chromatic number at most $(\frac{1}{4}) 2^{2^k}$. Let $G$ be a graph with no holes of length at least $5$ and clique number $k+1$. By Lemma \ref{MAINLEMMA}, $\chi(G) \leq 4((\frac{1}{4}) 2^{2^k})^2 = (\frac{1}{4}) 2^{2^{k+1}}$. This completes the induction step. 
\end{proof}

Scott and Seymour \cite{SS1} proved almost the same bound for a much bigger class, viz. graphs not containing odd holes. The proof here mimics their proof but is much easier because of the stronger hypothesis. Note that the graphs mentioned in \cite{SS1} have no holes of size at least $5$ and have $\chi > \omega^{\frac{log 3.5}{log 3}}$. Their example is as follows: Let $G_0$ have one vertex, and for $k \geq 1$ let $G_k$ be obtained from $G_{k-1}$ by substituting a seven-vertex antihole for each vertex. Then $G_k$ has no hole of length at least $5$, $\omega(G_k) = 3^k$, and $\chi(G_k) \geq (\frac{7}{2})^k$.  We conjecture the following.

\begin{conjecture}
Let $G$ be a graph with no holes of length at least $5$. Then $V(G)$ can be partitioned into two sets, none of them containing a maximum clique of $G$. 
\end{conjecture}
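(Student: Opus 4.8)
\medskip
\noindent\textbf{A proposed approach to the conjecture.}
Assume $G$ has an edge, so $\omega:=\omega(G)\ge 2$ (for $\omega\le 1$ the statement is vacuous or should be read to include this hypothesis). Since every clique of size $\omega$ is a maximum clique, the conclusion is equivalent to saying that $V(G)$ can be partitioned into sets $V_1,V_2$ with $\omega(G[V_1])\le\omega-1$ and $\omega(G[V_2])\le\omega-1$. The plan is induction on $\omega$. The base case $\omega=2$ is clean: a graph with no triangle and no hole of length at least $5$ has no odd cycle at all, since a shortest odd cycle is chordless (a chord splits an odd cycle into two strictly shorter cycles of opposite parity) and would therefore be a triangle or a hole of length at least $5$; hence $G$ is bipartite, and its two colour classes are the required partition.

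\medskip
For the inductive step let $G$ have no hole of length at least $5$ and $\omega(G)=\omega\ge 3$; we may assume $G$ is connected and run the breadth-first levelling $L_0=\{v\},L_1,L_2,\dots$ of Lemma~\ref{MAINLEMMA}. That lemma's proof already contains a way to split one level in two: after the reduction that every vertex of $L_0\cup\dots\cup L_{j-1}$ has a private child, pick $x\in L_{j-1}$ with private child $y\in L_j$, put $A=N(y)\cap L_j$ and $B=L_j\setminus(A\cup\{y\})$, and use the absence of long holes to force $x$ to be complete to $B$; then $A$ and $B\cup\{y\}$ each have clique number at most $\omega-1$. Thus every level can be partitioned into two parts of clique number at most $\omega-1$ (the top and bottom levels need only minor variants). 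Since every clique of $G$ is contained in the union of two consecutive levels, one would then hope to choose these level-splits \emph{coherently} and let $V_1$, $V_2$ be the unions of the first, respectively second, parts over all levels.

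\medskip
The main obstacle is precisely this coherence between consecutive levels. A maximum clique may straddle two levels, $K=A'\cup B'$ with $\emptyset\ne A'\subseteq L_j$ and $\emptyset\ne B'\subseteq L_{j+1}$, and a plain $2$-partition of each level does not prevent $K$ from being monochromatic. In Lemma~\ref{MAINLEMMA} this is evaded by doing more: $L_j$ is given not a $2$-partition but a \emph{proper} colouring (with $2n$ colours), and the rule ``a vertex of $L_{j+1}$ takes the least colour among its parents'' genuinely uses properness --- two parents of the same colour are non-adjacent, which is what manufactures the forbidden hole. With only two parts per level that leverage disappears, so one needs a substitute: either strengthen the inductive hypothesis to a structured $2$-partition of each level that is stable under such a parent rule, or show directly that a monochromatic clique spanning $L_j\cup L_{j+1}$ yields a hole of length at least $5$, using that $L_{j+1}$ is connected so that two of its vertices can be joined through $L_{j+2}$, in the style of the hole constructions in the lemma. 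I expect this to be the crux of any correct proof.

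\medskip
Two remarks constrain the search. First, one cannot simply let $V_1$ be a stable set meeting every maximum clique: the iterated substitution of a $7$-vertex antihole into itself lies in the class and has clique number $9$, yet every stable set of it is supported on at most two of the seven outer blobs while the outer antihole contains a triangle avoiding any two of its vertices, so no stable set meets all maximum cliques. Second, substituting copies of a single graph into a graph $J$ preserves the property as soon as $J$ has it (colour each blob by the colour of its outer vertex), which already settles the examples $G_k$; one could try to reduce the general case to substitution-prime graphs via modular and clique-cutset decompositions --- the series and parallel cases being easy --- but gluing two good $2$-partitions across a clique cutset runs into a boundary-reconciliation problem, since the two partitions need not agree, even after swapping colours, on the shared clique. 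So that route does not obviously finish either.
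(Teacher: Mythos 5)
You have not proved the statement, and indeed the paper does not either: this is stated there as an open conjecture (the analogue for odd-hole-free graphs is the Ho\`ang--McDiarmid divisibility conjecture), so there is no ``paper proof'' to match your argument against. What you have written is an honest plan plus an identification of where it breaks, and the break is real. Your base case $\omega=2$ is fine (no triangle and no hole of length at least $5$ forces bipartiteness, since a shortest odd cycle is induced), and your negative remarks (a single stable set hitting all maximum cliques cannot work, substitution-closed examples are easy) are correct but do not advance the inductive step.

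The genuine gap is exactly the one you flag and then leave open: cliques straddling two consecutive levels. The level-splitting from Lemma \ref{MAINLEMMA} gives each $L_j$ a partition into two parts of clique number at most $\omega-1$, but a maximum clique contained in $L_j\cup L_{j+1}$ can be monochromatic under any naive pairing of these per-level splits, and nothing in your sketch rules this out. Your proposed substitute --- deriving a hole of length at least $5$ from a monochromatic straddling clique via connectivity of $L_{j+1}$ or $L_{j+2}$ --- is not carried out, and there is a structural reason it does not come for free: every hole construction in the lemma is seeded by a \emph{non-edge} (two parents of the same colour in a proper colouring, or a vertex of $B$ non-adjacent to $x$), whereas the vertices of a straddling clique are pairwise adjacent, so closing a path through higher levels between two of them produces a cycle with a chord, not a hole. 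So the ``parent rule'' leverage you correctly identify as essential has no analogue in your two-part setting, and the clique-cutset/modular-decomposition route you mention also stalls at the boundary-reconciliation problem you describe. As it stands the proposal is a useful map of the difficulty, not a proof; the conjecture remains open.
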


Hoang and McDiarmid (\cite{HM}) conjecture that the previous statement actually holds for all odd-hole-free graphs. The truth of the above conjecture will immediately imply $\chi(G) \leq 2^{\omega(G)}$ for a graph $G$ with no holes of length at least $5$. It is possible that the following stronger conclusion holds.

\begin{conjecture}
Let $G$ be a graph with no holes of length at least $5$. Then $\chi(G) \leq \omega(G)^2$. 
\end{conjecture}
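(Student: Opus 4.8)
The plan is to prove the stronger statement that every graph $G$ with no hole of length at least $5$ is \emph{perfectly divisible}: for every induced subgraph $H$ of $G$, the vertex set $V(H)$ can be partitioned into a set $A$ with $H[A]$ perfect and a set $B$ with $\omega(H[B]) < \omega(H)$. This suffices, since if $f(\omega)$ denotes the maximum chromatic number of a graph in our (hereditary) class with clique number $\omega$, then applying the partition to $G$ itself gives $\chi(G) \le \chi(G[A]) + \chi(G[B]) = \omega(G[A]) + \chi(G[B]) \le \omega(G) + f(\omega(G)-1)$, whence $f(\omega) \le \omega + f(\omega-1)$ and therefore $f(\omega) \le \binom{\omega+1}{2} \le \omega^2$. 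In fact it would be enough to find such a partition with $G[A]$ merely $\omega(G)$-colorable rather than perfect.

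The construction of $A$ should reuse the levelling machinery of Lemma~\ref{MAINLEMMA}. Fix a vertex $v$ and let $L_i$ be the set of vertices at distance $i$ from $v$; the levelling property already shows that $G[L_0 \cup L_2 \cup L_4 \cup \cdots]$ and $G[L_1 \cup L_3 \cup \cdots]$ are \emph{disjoint} unions of the graphs $G[L_i]$, so the whole difficulty is concentrated within a single level. Within $L_i$ one wants to peel off a perfect (or $\omega$-colorable) ``chordal-like'' skeleton --- for instance a union of cliques whose intersection pattern is tree-like --- leaving a residue of strictly smaller clique number; the forbidden long holes are exactly what forces the adjacencies across consecutive levels, and the pieces $A$ and $B \cup \{y\}$ appearing in the proof of Lemma~\ref{MAINLEMMA}, to fit together without creating imperfect configurations. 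A cleaner route might be to argue directly with a neighbourhood: show that for a suitable $v$ the set $N[v]$, together with a perfect piece extracted from $G - N[v]$ by induction, still meets every maximum clique of $G$.

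The main obstacle is that ``no long hole'' by itself does \emph{not} make the perfect part perfect: our class contains every odd antihole $\overline{C_{2k+1}}$ with $2k+1 \ge 7$, and even the graphs obtained by repeatedly substituting $\overline{C_7}$ into itself (the Scott--Seymour example quoted above), none of which is perfect. So the extra structure coming from allowing only triangles and squares as induced cycles --- in particular the behaviour of the $4$-holes --- must be used to guarantee that every odd antihole is absorbed into the low-clique part $B$. This is plausible because an odd antihole $\overline{C_{2k+1}}$ satisfies $\chi = \omega + 1$, i.e. it is only barely imperfect, so locally one expects to be able to throw one of its vertices into $B$; organizing these local choices globally, so that $\omega(G[B])$ drops by exactly one everywhere at once, is the crux. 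Even a partial result here --- proving the weaker displayed conjecture that $V(G)$ splits into two parts neither containing a maximum clique, which would already give $\chi(G) \le 2^{\omega(G)}$ --- would be a natural first target, and a levelling argument along the lines of Lemma~\ref{MAINLEMMA} seems the most promising tool for it.
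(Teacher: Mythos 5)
The statement you are addressing is posed in the paper as an open conjecture: the paper offers no proof of $\chi(G)\le\omega(G)^2$, and the best bounds it records are the $2^{2^{\omega}}$ theorem proved via Lemma~\ref{MAINLEMMA} and Seymour's unpublished $2^{\omega^2}$. Your submission is likewise not a proof but a strategy. The reduction you state is fine as far as it goes: if every graph in the class were perfectly divisible, then with $f$ the (hereditary) extremal function one gets $f(\omega)\le\omega+f(\omega-1)$, hence $f(\omega)\le\binom{\omega+1}{2}\le\omega^2$, and indeed an $\omega$-colorable part in place of a perfect part would suffice. But the entire content of the conjecture is then shifted onto the claim that graphs with no hole of length at least $5$ are perfectly divisible (or admit the weaker clique-splitting partition), and that claim is never established. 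The levelling discussion does not come close: the levelling only separates nonconsecutive levels, and Lemma~\ref{MAINLEMMA} extracts from each level pieces of clique number $\omega-1$ at a quadratic cost in colors, which is exactly what produces the doubly exponential bound; nothing in it produces a single global set $A$ inducing a perfect (or even $\omega$-colorable) graph whose complement has smaller clique number. You name this yourself as ``the crux,'' which is an accurate self-assessment: the proposal identifies a plausible route (one closely related to Ho\`ang-type perfect divisibility questions and to the Hoang--McDiarmid conjecture cited in the paper) but supplies no argument for its key step.

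Concretely, the missing step is nontrivial for reasons you partially acknowledge: the class contains the iterated substitutions of $\overline{C_7}$, so the perfect part cannot simply be a neighborhood, a union of levels, or a chordal-like skeleton chosen greedily; any proof of perfect divisibility must handle how odd antiholes of length $7$ propagate under substitution, and no mechanism for that is proposed. The suggestion to ``argue directly with $N[v]$'' also fails as stated: $N[v]$ need not induce a perfect graph (take $v$ complete to a $\overline{C_7}$), and $G-N[v]$ need not have smaller clique number. So the proposal should be read as a research plan, not a proof; as such it is reasonable, but it does not settle the conjecture, which remains open in the paper as well.
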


Paul Seymour (private communication) has recently proved that $\chi(G) \leq 2^{{\omega(G)}^2}$ holds for every graph $G$ with no holes of length at least $5$.  \\\

Here is a numerical problem. 

\begin{problem}
Let $f(n)$ be the largest chromatic number of a graph with no holes of length at least $5$ and clique number $n$. Clearly $f(1) = 1$ and $f(2) = 2$. It is known from \cite{CRST} that $f(3)=4$. Determine $f(4)$.  
\end{problem}

Chordal graphs (see \cite{GAD}), have fantastic properties, like the following:

\begin{itemize}
\item Has a simplicial ordering 
\item Every minimal cutset is a clique
\item Intersection of subtrees of a tree
\end{itemize}

We believe that there should be a characterization based on a relaxation of one of these concepts. 

\begin{problem}
Give a structural characterization of graphs with no holes of length at least $5$. 
\end{problem}

Recognizing and optimizing graphs with no holes of length at least $5$ also look interesting. 

\begin{problem}
Give a polynomial time algorithm  to recognize graphs with no holes of length at least $5$. 
\end{problem}

\begin{problem}
Can we  determine the clique number, chromatic number, stability number, and clique cover number of a graph not containing  holes of length at least $5$ in polynomial time.
\end{problem}

Note that if we insist that we don't have holes of length at least $5$ both in the graph and its complement, we have what is called a ``weakly chordal graph". Ryan Hayward (\cite{RBH}) introduced these graphs and proved a structure theorem for them that easily implied that they are perfect.

We end this section with two more problems and a simple proposition.

\begin{problem}
Determine a good $\chi$-bounding function for the class of graphs in which every induced cycle is either a triangle, square or a pentagon.
\end{problem}

\begin{problem}
Determine a good $\chi$-bounding function for the class of graphs in which every induced cycle is a pentagon.
\end{problem}

\begin{proposition}
Let $\mathcal{G}$ be the class of graphs in which any two induced cycles have the same parity. Then $\mathcal{G}$ is $\chi$-bounded.
\end{proposition}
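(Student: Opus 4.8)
The plan is to partition $\mathcal{G}$ according to the common parity of the induced cycles and to treat the two resulting subclasses separately, using one elementary observation and one known theorem.

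First I would dispose of the graphs $G\in\mathcal{G}$ having \emph{no} induced odd cycle (this includes every forest, vacuously). I claim such a $G$ is bipartite, and hence $\chi(G)\le 2$. Suppose not, and let $C$ be a shortest odd cycle of $G$. If $C$ had a chord, that chord would split $C$ into two cycles whose lengths sum to $|C|+2$ and each of which has length strictly between $2$ and $|C|$; since $|C|$ is odd, one of the two pieces is an odd cycle shorter than $C$, contradicting minimality. So $C$ is chordless, i.e.\ an induced odd cycle of $G$ — a contradiction. Hence $G$ has no odd cycle at all and is bipartite.

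Next I would handle the graphs $G\in\mathcal{G}$ that do contain an induced odd cycle. Because any two induced cycles of $G$ have the same parity, $G$ then has no induced cycle of even length; in particular $G$ has no hole of even length, that is, $G$ is even-hole-free. Here I would invoke the theorem of Addario-Berry, Chudnovsky, Havet, Reed and Seymour that every even-hole-free graph contains a bisimplicial vertex (a vertex whose neighbourhood is the union of two cliques); deleting such a vertex and inducting gives $\chi(G)\le 2\omega(G)-1$. Any proof that the even-hole-free graphs are $\chi$-bounded would do equally well.

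Combining the two cases, every $G\in\mathcal{G}$ satisfies $\chi(G)\le\max\{2,\,2\omega(G)-1\}\le 2\omega(G)$, so $\mathcal{G}$ is $\chi$-bounded. The only non-elementary ingredient is the even-hole-free bound; the rest is just the shortest-odd-cycle argument, so I do not expect a genuine obstacle here — the point is simply the observation that ``no induced odd cycle'' forces bipartiteness while ``some induced odd cycle'' forces even-hole-freeness.
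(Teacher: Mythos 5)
Your proof is correct and follows essentially the same route as the paper: even-parity graphs are bipartite, odd-parity graphs are even-hole-free and hence satisfy $\chi \le 2\omega-1$ by the bisimplicial-vertex theorem of Addario-Berry, Chudnovsky, Havet, Reed and Seymour. The only difference is that you spell out the shortest-odd-cycle argument for bipartiteness, which the paper leaves implicit.
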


\begin{proof}
If the parity is even, then the graphs are bipartite. 
If the parity is odd, then the graphs are even-hole-free, and the main result of \cite{ACHRS} is that such graphs contain a bisimplicial vertex (a vertex whose neighborhood is a union of two cliques), and hence satisfy $\chi \leq 2\omega - 1$. 
\end{proof}

\end{section}

\begin{section}
{Perfect chromatic number}
In this section, we show that it is NP-complete to determine whether the vertex set of a given graph can be partitioned into two sets such that the graph induced by each set is perfect.

% \keywords{minor-perfect graph, perfect graph, forbidden induced subgraph, claw-free graph, line graph, chordal graph}

% \subjclass[2010]{Primary 05C75; Secondary 05C07, 05C17}
%05C99 None of the above, but in graph theory
%05C75 Structural characterization of families of graphs 
%05C07 Vertex degrees 
%05C10 Planar graphs; geometric and topological aspects of graph theory
%05C76 Graph operations (line graphs, products, etc.)

The perfect chromatic number, denoted by $\chi_p$, of a graph is the smallest number of colors required to color the vertices of a graph so that each color  class induces a perfect graph. 
Graphs with $\chi_p=1$ are precisely the perfect graphs. A polynomial-time algorithm is known for recognizing perfect graphs \cite{CCLSV}, and hence there is a polynomial-time 
algorithm to determine whether a given graph has $\chi_p=1$. It turns out that determining whether a given graph has $\chi_p=2$ is NP-complete.

\begin{lemma}\label{BOUNDS}
Let $G$ be a graph. Then $\frac{\chi(G)}{\omega(G)} \leq \chi_p(G) \leq \lceil \frac{\chi(G)}{2} \rceil$. 
\end{lemma}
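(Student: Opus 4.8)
The inequality has two halves, and I would handle them separately, each by a short direct argument.

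For the lower bound $\frac{\chi(G)}{\omega(G)} \leq \chi_p(G)$: start from an optimal partition of $V(G)$ into $\chi_p(G)$ perfect sets $V_1, \dots, V_{\chi_p(G)}$. Each $G[V_i]$ is perfect, so $\chi(G[V_i]) = \omega(G[V_i]) \leq \omega(G)$. A proper coloring of $G$ can be assembled by properly coloring each $G[V_i]$ with its own palette of $\omega(G[V_i]) \leq \omega(G)$ colors, giving $\chi(G) \leq \sum_{i=1}^{\chi_p(G)} \omega(G[V_i]) \leq \chi_p(G)\,\omega(G)$. Rearranging yields the claim. The only thing to be careful about is invoking the defining property of perfect graphs (that $\chi = \omega$ holds not just for the graph but is the reason each piece needs only $\omega(G)$ colors).

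For the upper bound $\chi_p(G) \leq \lceil \chi(G)/2 \rceil$: take an optimal proper coloring of $G$ with color classes $C_1, \dots, C_{\chi(G)}$, each of which is a stable set. Pair them up: $C_1 \cup C_2$, $C_3 \cup C_4$, and so on, with the last class left alone if $\chi(G)$ is odd. Each union of two stable sets induces a graph that is bipartite, hence perfect. This produces a partition of $V(G)$ into $\lceil \chi(G)/2 \rceil$ perfect sets, so $\chi_p(G) \leq \lceil \chi(G)/2 \rceil$.

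Neither direction presents a real obstacle — both are one-line observations once the right objects (an optimal perfect partition for the lower bound, an optimal proper coloring for the upper bound) are written down. If anything, the ``hard part'' is purely expository: making sure the reader sees that \emph{bipartite graphs are perfect} is what powers the upper bound, and that \emph{perfect pieces each use at most $\omega(G)$ colors} is what powers the lower bound; both facts are standard and already implicit in the section's discussion of perfect graphs.
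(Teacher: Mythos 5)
Your proposal is correct and follows essentially the same argument as the paper: the lower bound by coloring each perfect piece with at most $\omega(G)$ colors, and the upper bound by pairing stable color classes into bipartite (hence perfect) sets. No gaps to report.
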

\begin{proof}
Let $\{C_1, \cdots, C_{\chi_p(G)}\}$ be a partition of $V(G)$ into perfect sets. Since $G:C_i$ is perfect, $\chi(C_i) \leq \omega(G)$. 
Giving different sets of colors to $C_1, \cdots, C_{\chi_p(G)}$, we get a $\chi_p(G)\omega(G)$-coloring of $G$. Hence $\chi(G) \leq \chi_p(G)\omega(G)$, establishing the first inequality. 
Let $\{C_1, \cdots, C_{\chi(G)}\}$ be a partition of $V(G)$ into stable sets.  The union of two color classes induces a bipartite graph, and hence a perfect graph. So we can pair the $C_i$ into perfect sets. 
%For each odd $i$, $C_i \cup C_{i+1}$ induces a bipartite graph, and hence is perfect. 
%Hence $\{C_1 \cup C_2, \cdots, C_{\frac{k}{2}}\}$ is a partition of $V(G)$ into perfect sets. 
Thus $\chi_p(G) \leq \lceil \frac{\chi(G)}{2} \rceil$. 
\end{proof}
\begin{lemma}\label{HALVINGLEMMA}
Let $G$ be a triangle-free graph. Then $\chi_p(G) = \lceil \frac{\chi(G)}{2} \rceil$. 
\end{lemma}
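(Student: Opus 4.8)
The plan is to pair the upper bound already established in Lemma~\ref{BOUNDS} with a matching lower bound coming from triangle-freeness. Since Lemma~\ref{BOUNDS} gives $\chi_p(G)\le\lceil\chi(G)/2\rceil$ for \emph{every} graph, the whole task reduces to proving $\chi_p(G)\ge\lceil\chi(G)/2\rceil$ when $G$ is triangle-free.

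The single idea needed is the structural remark that a perfect graph with no triangle is bipartite: if $H$ is perfect and triangle-free then $\omega(H)\le 2$, so $\chi(H)=\omega(H)\le 2$. Granting this, I would take an optimal partition $V(G)=C_1\cup\cdots\cup C_m$ with $m=\chi_p(G)$ into perfect sets. Each $G[C_i]$ is an induced subgraph of $G$, hence triangle-free, and it is perfect, hence bipartite, so $\chi(G[C_i])\le 2$. Assigning a private pair of colours to each block yields a proper colouring of $G$ with at most $2m$ colours, so $\chi(G)\le 2\chi_p(G)$, i.e.\ $\chi_p(G)\ge\chi(G)/2$; integrality of $\chi_p(G)$ upgrades this to $\chi_p(G)\ge\lceil\chi(G)/2\rceil$. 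Combining with Lemma~\ref{BOUNDS} gives the desired equality.

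In fact one can shortcut even this: the lower bound in Lemma~\ref{BOUNDS} already reads $\chi(G)/\omega(G)\le\chi_p(G)$, and for a triangle-free graph with at least one edge $\omega(G)=2$, which immediately yields $\chi_p(G)\ge\chi(G)/2$ (the edgeless case being trivial). There is no real obstacle here; the only points that warrant a moment's care are the degenerate edgeless case and the observation that ``triangle-free plus perfect'' forces bipartiteness, both of which are routine.
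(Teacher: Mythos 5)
Your proof is correct and, especially in your final ``shortcut'' paragraph, is essentially the paper's own argument: the paper simply notes that the edgeless case is trivial, that triangle-freeness forces $\omega(G)=2$, and then invokes both inequalities of Lemma~\ref{BOUNDS} (with integrality of $\chi_p$) to get equality. Your longer route, re-deriving the lower bound by observing that each perfect triangle-free block is bipartite, is just an unfolding of the $\chi(G)/\omega(G)\le\chi_p(G)$ inequality and adds nothing beyond what Lemma~\ref{BOUNDS} already provides.
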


\begin{proof}
We may assume that $G$ has edges, for the desired relation is trivially true for edgeless graphs. Since $G$ is triangle-free,  $\omega(G) = 2$. The desired result follows from Lemma \ref{BOUNDS}. 
\end{proof}

\begin{theorem}
The problem of determining whether $\chi_p=2$ for a given graph is NP-complete.
\end{theorem}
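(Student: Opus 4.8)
The plan is to establish the two directions of NP-completeness separately. Membership in NP is immediate: a certificate for a yes-instance is a partition $V(G) = V_1 \cup V_2$, and by the polynomial-time perfect-graph recognition algorithm of \cite{CCLSV} one can verify in polynomial time that both $G[V_1]$ and $G[V_2]$ are perfect. So all the real work is in proving NP-hardness.

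For NP-hardness I would reduce from the problem of deciding whether a triangle-free graph is $3$-colourable, which is known to be NP-complete (a theorem of Maffray and Preissmann; indeed $k$-colourability is already NP-complete on triangle-free graphs for every $k \ge 3$). Given a triangle-free graph $H$ --- which we may assume has at least one edge, since an edgeless graph is trivially $3$-colourable --- I would take $G$ to be the Mycielskian $M(H)$. This graph is constructible from $H$ in polynomial time, it is again triangle-free, and Mycielski's construction gives $\chi(G) = \chi(H) + 1$. Because $H$ has an edge, $\chi(H) \ge 2$ and hence $\chi(G) \ge 3$, so Lemma \ref{HALVINGLEMMA} applies and gives
\[
  \chi_p(G) \;=\; \left\lceil \frac{\chi(G)}{2} \right\rceil \;=\; \left\lceil \frac{\chi(H)+1}{2} \right\rceil \;\ge\; 2 .
\]
Consequently $\chi_p(G) = 2$ if and only if $\chi(G) \le 4$, i.e.\ if and only if $\chi(H) \le 3$; this is exactly the reduction required, and together with membership in NP it proves the theorem.

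The substantive content of the argument is, on one hand, Lemma \ref{HALVINGLEMMA}, which collapses $\chi_p$ to $\lceil \chi/2 \rceil$ on triangle-free graphs and so converts a statement about perfect partitions into a statement about ordinary colouring, and, on the other hand, the classical (external) fact that colouring triangle-free graphs is already NP-hard. The one point that needs a little care --- and the reason for passing through the Mycielskian rather than using $H$ directly --- is that ``$\chi_p = 2$'' corresponds not to a single value of $\chi$ but to the band $\chi \in \{3,4\}$, so the reduction must make the lower end automatic; using $M(H)$, we get $\chi(G) \ge 3$ for free whenever $H$ has an edge, which is what makes ``$\chi_p(G) = 2$'' cleanly equivalent to ``$H$ is $3$-colourable''. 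An essentially equivalent alternative is the reduction $H \mapsto H \cup C_5$ (disjoint union) from the NP-complete problem of $4$-colouring triangle-free graphs: then $\chi(G) = \max(\chi(H), 3) \ge 3$, so $\chi_p(G) = 2$ iff $\chi(H) \le 4$. I do not expect any genuine obstacle beyond correctly citing and invoking the hardness of triangle-free colouring and the standard properties of the Mycielskian.
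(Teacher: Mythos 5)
Your proof is correct and rests on exactly the same two pillars as the paper's: Lemma \ref{HALVINGLEMMA} (collapsing $\chi_p$ to $\lceil\chi/2\rceil$ on triangle-free graphs) and the Maffray--Preissmann NP-completeness of colouring triangle-free graphs \cite{MP}, with NP membership via \cite{CCLSV}. The only difference is cosmetic: you force $\chi(G)\ge 3$ by passing to the Mycielskian and reducing from $3$-colourability, whereas the paper uses the triangle-free graph itself (an identity reduction from $4$-colourability) to get hardness of ``$\chi_p\le 2$'' and then isolates ``$\chi_p=2$'' using the polynomial-time test for $\chi_p=1$; both variants are sound.
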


\begin{proof}
We show that the restricted problem of determining whether a given triangle-free graph has $\chi_p=2$ is NP-complete. 
Let $G$ be a given triangle-free graph and we would like to find out whether $\chi_p(G) = 2$ or not. By Lemma \ref{HALVINGLEMMA}, $\chi_p(G) = 2$ if and only if $\chi(G)=3$ or $4$. 
Since the problem of determining whether a given triangle-free graph is $4$-colorable was shown to be NP-complete by Maffray and Preissmann \cite{MP}, we conclude that it is NP-complete to determine whether a given triangle-free graph has $\chi_p \leq 2$. 
Since it is easy to determine whether a graph has $\chi_p = 1$, we are done. 
\end{proof}

A natural question is to ask whether a $\chi$-bounded class of graphs is $\chi_p$ bounded, i.e., is there an absolute bound on $\chi_p$ for graphs in the class. For perfect graphs, $\chi_p =1$.
 A first step would be to consider graphs in which every induced subgraph has $\chi - \omega \leq 1$. We call such graphs nice. 
\begin{theorem}
Nice graphs have unbounded $\chi_p$. 
\end{theorem}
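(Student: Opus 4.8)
The plan is to show that for every $t$ there is a nice graph whose perfect chromatic number exceeds $t$; concretely, I would take $G_n := L(K_n)$, the line graph of the complete graph (the triangular graph), and let $n\to\infty$. To see that $G_n$ is nice, note that every induced subgraph of $L(K_n)$ is isomorphic to $L(F)$ for some graph $F$ on at most $n$ vertices, namely the graph whose edge set is the chosen vertex set of $L(K_n)$. By Vizing's theorem $\chi(L(F))=\chi'(F)\le\Delta(F)+1$, while the $\Delta(F)$ edges at a vertex of maximum degree form a clique in $L(F)$, so $\omega(L(F))\ge\Delta(F)$. Hence $\chi(H)\le\omega(H)+1$ for every induced subgraph $H$ of $G_n$, so $G_n$ is nice.

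For the lower bound on $\chi_p$, suppose $V(G_n)=E(K_n)$ is partitioned into perfect sets $E_1,\dots,E_t$, and let $F_i$ be the spanning subgraph of $K_n$ with edge set $E_i$. Then $G_n[E_i]=L(F_i)$, which is perfect by hypothesis. If $F_i$ had a cycle of odd length $\ell\ge5$, the $\ell$ edges of that cycle would induce a $C_\ell$ in $L(F_i)$, an odd hole, contradicting perfection. So each $F_i$ has no odd cycle of length at least $5$.

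The key ingredient is then the structural lemma: a graph with no odd cycle of length at least $5$ has chromatic number at most an absolute constant (one can take $4$). Since $\chi$ of a graph is the maximum of $\chi$ over its blocks, it suffices to bound $\chi(B)$ for a $2$-connected non-bipartite $B$ with no odd cycle of length $\ge5$. Such a $B$ contains a triangle $\{x,y,z\}$; for any other vertex $w$, the fan lemma gives two paths from $w$ to $\{x,y,z\}$, disjoint except at $w$ and meeting the triangle only at their (distinct) ends, say $x$ and $y$. Closing these paths through the edge $xy$, and alternatively through the length-$2$ path $x,z,y$, gives two cycles of consecutive lengths; the second has at least $4$ vertices, so it is not the odd one, hence the first is odd and therefore a triangle, forcing both paths to have length $1$, i.e. $w$ is adjacent to two of $x,y,z$. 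Moreover, the set of vertices adjacent to exactly a given pair of $\{x,y,z\}$, and the set adjacent to all three, are each independent, since an edge inside such a set completes a $5$-cycle together with two edges of the triangle. Colouring $\{x\}$ together with the vertices adjacent to exactly $\{y,z\}$, and symmetrically for $y$ and $z$, and giving the vertices adjacent to all three a fourth colour, yields a proper $4$-colouring of $B$. (This lemma can also be quoted from the theory of perfect line graphs.)

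Finally, a product of proper colourings of $F_1,\dots,F_t$ is a proper colouring of $K_n$, since every edge of $K_n$ lies in some $E_i$ and is properly coloured by the $i$-th factor; hence $n=\chi(K_n)\le\prod_{i=1}^t\chi(F_i)\le 4^t$, so $t\ge\log_4 n$, and therefore $\chi_p(G_n)\ge\log_4 n\to\infty$. The only step with genuine content is the structural lemma on graphs without long odd cycles; everything else is bookkeeping around Vizing's theorem and the correspondence between edge colourings of $F$ and vertex colourings of $L(F)$.
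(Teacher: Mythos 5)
Your proof is correct, and after the common setup it takes a genuinely different route from the paper. Both arguments use the same construction: $L(K_n)$ is nice by Vizing's theorem, and a partition of $V(L(K_n))$ into perfect sets corresponds to an edge-partition of $K_n$ into graphs $F_1,\dots,F_t$ none of which contains an odd cycle of length at least $5$ (the paper only records the absence of $5$-cycles). The paper then finishes in one line by quoting multicolour Ramsey: if $n > R(5,\dots,5)$ for $k$ colours, some $F_i$ contains a monochromatic $K_5$ and hence a $5$-cycle, so $\chi_p(L(K_n)) > k$. You instead prove an elementary structural lemma -- a graph with no odd cycle of length at least $5$ has chromatic number at most $4$, via blocks, the fan lemma, and the analysis of neighbourhoods of a triangle -- and then use the product-colouring bound $n=\chi(K_n)\le\prod_i\chi(F_i)\le 4^t$. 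Your block/fan argument and the independence of the trace classes check out, so the lemma and hence the whole proof are sound. What each approach buys: the paper's is shorter but leans on Ramsey numbers and yields only a very slowly growing lower bound (roughly the inverse of the multicolour Ramsey function), whereas yours is self-contained and quantitative, giving the explicit bound $\chi_p(L(K_n))\ge\log_4 n$, which is also directly relevant to the paper's later problem of determining $\chi_p(L(K_n))$.
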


\begin{proof}
By Vizing's theorem, line graphs are nice, and therefore it suffices to prove that line graphs are not $\chi_p$ bounded. Consider the line graph of $K_n$. 
A partition of the vertex set of $L(K_n)$ into $k$ perfect sets gives a partition of the edge set of $K_n$ into $k$ sets such that no one contains a $5$-cycle (as a subgraph). 
But if $n  >  R(5, \cdots ,5)$, where $R(5, \cdots ,5)$ is the Ramsey number for $k$ colors, then this cannot happen. 
Thus, if $n  >  R(5, \cdots ,5)$, $\chi_p(L(K_n)) > k$. 
\end{proof}

A hereditary class of graphs $\mathcal{G}$ is said to be $\chi_p$-bounded if there exists a function $f$ such that every graph $G \in \mathcal{G}$ satisfies $\chi_p(G) \leq f(\omega(G))$. 
\begin{lemma}
A graph class if $\chi$-bounded if and only if it is $\chi_p$-bounded.  
\end{lemma}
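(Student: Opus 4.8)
The plan is to derive both implications directly from the two-sided estimate in Lemma~\ref{BOUNDS}, namely $\frac{\chi(G)}{\omega(G)} \leq \chi_p(G) \leq \lceil \frac{\chi(G)}{2} \rceil$. No new combinatorics is needed; the entire content is bookkeeping with the bounding functions, so I expect there to be no genuine obstacle—the only thing to be careful about is producing a function of $\omega$ alone in each direction.

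First I would prove the ``only if'' direction. Assume $\mathcal{G}$ is $\chi$-bounded, say $\chi(G) \leq g(\omega(G))$ for every $G \in \mathcal{G}$ and some function $g$. For any such $G$, the upper bound of Lemma~\ref{BOUNDS} gives $\chi_p(G) \leq \lceil \chi(G)/2 \rceil \leq \lceil g(\omega(G))/2 \rceil$. Hence $\mathcal{G}$ is $\chi_p$-bounded with bounding function $f(x) = \lceil g(x)/2 \rceil$.

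Next I would prove the ``if'' direction. Assume $\mathcal{G}$ is $\chi_p$-bounded, say $\chi_p(G) \leq f(\omega(G))$ for every $G \in \mathcal{G}$ and some function $f$. For any such $G$, the lower bound of Lemma~\ref{BOUNDS} rearranges to $\chi(G) \leq \omega(G)\,\chi_p(G) \leq \omega(G)\, f(\omega(G))$. Hence $\mathcal{G}$ is $\chi$-bounded with bounding function $g(x) = x\, f(x)$, completing the proof.

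The step I would flag is purely cosmetic rather than mathematical: one should note that in each direction the constructed bound genuinely depends only on $\omega(G)$ (and on the fixed function for the other parameter), which is immediate here since $\lceil g(x)/2 \rceil$ and $x f(x)$ are honest functions of $x$. If one wants the statement phrased for hereditary classes, as in the definition of $\chi_p$-boundedness given above, no change is required, since the argument is applied graph by graph and never leaves $\mathcal{G}$.
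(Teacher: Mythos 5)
Your proposal is correct and is exactly the argument the paper intends: the paper's proof is simply ``Follows easily from Lemma~\ref{BOUNDS},'' and your two directions (using $\chi_p \leq \lceil \chi/2 \rceil$ for one and $\chi \leq \omega\,\chi_p$ for the other) spell out precisely that derivation.
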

\begin{proof}
Follows easily from Lemma \ref{BOUNDS}. 
\end{proof}

\begin{problem}
Characterize graphs with $\chi_p = 2$.
\end{problem}

\begin{problem}
Determine $\chi_p(L(K_n))$.
\end{problem}

By Lemma \ref{BOUNDS} and the $4$-color theorem, we see that $\chi_p$ of a planar graph is at most $2$. Is there a way to prove this directly without resorting to the $4$-color theorem?

\begin{problem}
Find a direct proof as to why the vertex set of a planar graph can be partitioned into two perfect sets. 
\end{problem}

\end{section}

\begin{section}
{Antichains in the induced subgraph ordering}

This section is motivated by the celebrated Robertson-Seymour graph minor theorem. 

\begin{theorem}[Robertson-Seymour theorem]
Every antichain in the minor ordering is finite. 
\end{theorem}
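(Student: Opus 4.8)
The plan is to recast the statement in the language of well-quasi-orders. Since every graph is finite and each of the three minor operations (vertex deletion, edge deletion, edge contraction) strictly decreases $|V|+|E|$, the minor relation has no infinite strictly descending chain, so ``every antichain is finite'' is equivalent to the assertion that finite graphs are well-quasi-ordered by the minor relation. I would argue by contradiction using the Nash-Williams \emph{minimal bad sequence} method: if a bad sequence $G_1, G_2, \dots$ exists (infinite, with $G_i$ not a minor of $G_j$ whenever $i<j$), choose it so that $|V(G_1)|+|E(G_1)|$ is minimum, then $|V(G_2)|+|E(G_2)|$ is minimum subject to that, and so on. The point of this choice is that no sequence obtained by deleting a vertex or edge from some $G_i$ and then continuing with a tail of the original sequence can be bad; this is the lever that lets one pass to ``smaller'' pieces.

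Before the general case I would set up two warm-ups that supply the combinatorial engine. Kruskal's tree theorem gives that forests are well-quasi-ordered by the topological-minor, hence the minor, relation, and Higman's lemma gives that finite sequences over a well-quasi-order are again well-quasi-ordered. Combining these with a labelled Kruskal-type argument yields that, for each fixed $k$, the graphs of tree-width at most $k$ are well-quasi-ordered: encode a tree-decomposition of bounded width as a rooted tree whose nodes carry the bounded-size torsos together with their adhesion sets as labels, and run the minimal-bad-sequence argument to match up the decompositions of two terms. The genuinely geometric warm-up is that, for each fixed surface $\Sigma$, the graphs embeddable in $\Sigma$ are well-quasi-ordered; this is proved by induction on the Euler genus of $\Sigma$, using that a graph drawn on $\Sigma$ with no large flat grid has bounded tree-width, whereas a large flat grid can be used either to locate the required minor or to cut along a noncontractible curve and reduce the genus.

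The core of the proof is the Graph Minor Structure Theorem: for every graph $H$ there is a constant $c=c(H)$ such that every graph with no $H$-minor admits a tree-decomposition of adhesion at most $c$ in which each torso is $c$-almost-embeddable in a surface of Euler genus at most $c$, meaning embeddable after deleting at most $c$ apex vertices and inserting at most $c$ vortices of depth at most $c$ into faces. In the minimal bad sequence the first term $G_1$ plays the role of $H$, so every later $G_i$ is $H$-minor-free and therefore has such a decomposition. The finishing move is again a Kruskal/Higman-style argument, now over these decomposition trees: one first checks that the class of $c$-almost-embeddable graphs is well-quasi-ordered (from the surface warm-up, together with Higman applied to the linearly ordered ``society'' structure of each vortex and the fact that there are boundedly many apex vertices), and then one matches the decomposition trees of the $G_i$, using the minimal-bad-sequence hypothesis to compare the torsos, to extract indices $i<j$ with $G_i$ a minor of $G_j$, contradicting badness.

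The hard part, by an enormous margin, is the Structure Theorem itself: its proof needs the full apparatus of tangles, the flat wall theorem, and a long inductive ``clearing'' of the graph around a large wall, and this is where essentially all of the difficulty resides. A secondary obstacle is making the concluding well-quasi-order argument robust for almost-embeddable pieces --- the interaction of apex vertices with the surface embedding, and the bookkeeping needed to glue vortex societies consistently along adhesion sets, are delicate and were historically the source of several subtle points. By comparison, the minimal-bad-sequence wrapper, Kruskal, Higman, and the bounded-tree-width case are routine once the structural pieces are available.
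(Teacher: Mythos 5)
The paper does not prove this statement at all: the Robertson--Seymour theorem is quoted as a celebrated known result, purely to motivate the section on antichains in the induced subgraph ordering, so there is no proof in the paper to compare yours against. Your outline is, as a roadmap, a faithful summary of the architecture of the actual Robertson--Seymour proof: the reduction of ``no infinite antichain'' to well-quasi-ordering (legitimate, since a proper minor strictly decreases $|V|+|E|$, so there are no infinite strictly descending chains), the Nash-Williams minimal bad sequence with $G_1$ serving as the excluded minor for all later terms, Kruskal and Higman as combinatorial engines, the bounded tree-width and bounded genus cases, the structure theorem for $G_1$-minor-free graphs, and a final tree-matching argument over the decompositions.

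As a proof, however, it has a genuine gap, which you yourself flag: the Graph Minor Structure Theorem and the well-quasi-ordering of almost-embeddable graphs are invoked as black boxes, and these are not lemmas one can wave at --- they constitute essentially the entire content of the theorem, spread over roughly twenty papers. Two further points your sketch glosses over. First, in the concluding step you cannot get away with plain well-quasi-ordering of the class of almost-embeddable graphs: to glue minors consistently along adhesion sets you need a well-quasi-ordering statement for graphs equipped with a bounded number of distinguished (rooted) vertices, where the minor embedding is required to respect the roots; Robertson and Seymour prove a stronger statement about labelled hypergraphs precisely for this reason, and the induction has to be set up around that stronger statement from the start. Second, even the warm-ups (bounded tree-width, fixed surface) are substantial theorems, not routine applications of Kruskal. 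So: a correct description of the known proof strategy, but not a proof; nothing in it could be checked independently of the Robertson--Seymour series itself.
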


In the induced subgraph ordering, there can be infinite antichains. For example, $C_3, C_4, \cdots$ is a standard example. 
Another one: Let $T_k$ be obtained by adding two pendant edges at each point of the path with $k$ edges. Then $T_1, T_2, \cdots$ is an antichain.

In this section, we show that there exist real numbers $a,b > 0$ such that every sequence $f$ with $f_n \leq ae^{bn}$ is a forbidden sequence.

The forbidden sequence of a hereditary family of graphs $\mathcal{G}$ is the sequence whose $n$th term is the number of $n$-vertex minimal graphs not in $\mathcal{G}$.  A sequence $f$ is called a forbidden sequence if there is a hereditary graph class whose forbidden sequence is $f$.

\begin{theorem}
There exist real numbers $a,b > 0$ such that every sequence $f$ with $f_n \leq ae^{bn}$ is a forbidden sequence.
\end{theorem}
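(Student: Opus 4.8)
The plan is to first recast the problem entirely in terms of antichains. If $\mathcal{A}$ is any antichain in the induced subgraph order, then $\Forb(\mathcal{A})$ is a hereditary class whose set of minimal forbidden induced subgraphs is exactly $\mathcal{A}$; conversely, the set of minimal obstructions of any hereditary class is an antichain. Hence a sequence $f$ is a forbidden sequence if and only if there is an antichain $\mathcal{A}$ having exactly $f_n$ graphs on $n$ vertices for every $n$. Moreover, two graphs on the same number of vertices are incomparable unless isomorphic, so within a single level any set of pairwise non-isomorphic graphs is automatically an antichain; the only genuine difficulty in assembling such an $\mathcal{A}$ is to avoid comparabilities \emph{across} levels. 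Consequently it suffices to exhibit one infinite antichain $\mathcal{A}^\star$ that contains at least $ae^{bn}$ pairwise non-isomorphic graphs on $n$ vertices for all $n\ge n_0$, for some constants $a,b>0$ and threshold $n_0$ of our choosing: given a target $f$ with $f_n\le ae^{bn}$, we keep $f_n$ members of $\mathcal{A}^\star$ from level $n$ for $n\ge n_0$ and none below, and we choose $a$ small enough (e.g.\ $a\le e^{-bn_0}$) that $ae^{bn}<1$, hence $f_n=0$, for every $n<n_0$. The resulting subfamily is a subset of an antichain, hence an antichain with the prescribed level sizes.

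To construct $\mathcal{A}^\star$ I would encode binary strings as graphs with a rigid ``marker.'' For $k\ge 5$ and $w=w_1\cdots w_k\in\{0,1\}^k$, take a cycle $C_k$ with vertices $v_1,\dots,v_k$ and, at each $v_i$, attach a gadget sharing only the vertex $v_i$: a triangle (two new vertices) if $w_i=0$, and a $4$-cycle (three new vertices) if $w_i=1$. Call the result $G_w$. Then $|V(G_w)|=3k+\mathrm{wt}(w)$, so for a given $n$ one takes $k$ to be the nearest integer to $2n/7$ (which lies strictly between $n/4$ and $n/3$, so that the required weight $j=n-3k$ satisfies $0\le j\le k$ and is within a bounded distance of $k/2$) and runs over all $w$ of weight $j$. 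Each isomorphism class of such a $G_w$ arises from at most $2k$ strings $w$, so this leaves at least $\binom{k}{j}/(2k)\ge ae^{bn}$ non-isomorphic graphs on $n$ vertices, for suitable $a,b>0$ and all large $n$; one then sets $n_0$ large enough for this estimate and for $k\ge 5$ to hold.

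The crux is proving that $\mathcal{A}^\star=\{G_w\}$ is an antichain, i.e.\ that $G_w$ is an induced subgraph of $G_{w'}$ only when $G_w\cong G_{w'}$. Within a level this is automatic. In general, the key structural facts are that every induced cycle of $G_w$ lies inside a single block, that the blocks of $G_w$ are the central $C_k$ together with the per-vertex gadget cycles ($C_3$'s and $C_4$'s), and that $k\ge 5$. So if $G_w$ embeds as an induced subgraph into $G_{w'}$, the image of the central $C_k$ of $G_w$ is an induced cycle of length $k\ge5$ in $G_{w'}$, which can only be its central cycle; hence both central cycles have the same length $k$, and the embedding restricts to a dihedral bijection $\sigma$ between them. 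Each gadget block of $G_w$ at $v_i$ meets the central cycle only at $v_i$, so its image is a block of $G_{w'}$ meeting the central cycle only at $v_{\sigma(i)}$ -- namely the gadget of $G_{w'}$ at $v_{\sigma(i)}$ -- and since a triangle neither embeds into nor contains a $4$-cycle as an induced subgraph, we conclude $w_i=w'_{\sigma(i)}$ for all $i$. Thus $w$ and $w'$ are dihedrally equivalent, so $G_w\cong G_{w'}$, as required. (The same block argument shows $G_w\cong G_{w'}$ forces dihedral equivalence, which justifies the counting bound above.)

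The step I expect to be the real obstacle is exactly this last one: ruling out that a gadget ``slides along'' the central cycle or is realized inside a gadget of a different type. The block decomposition is what confines each gadget's image to a single cycle vertex, and the choice of two gadgets that are \emph{mutually non-embeddable} as induced subgraphs (a triangle and a $4$-cycle) is what upgrades ``same attachment vertex'' to the bitwise equality $w_i=w'_{\sigma(i)}$. Everything else -- the reduction to antichains, the count of isomorphism classes via binomial coefficients near the center, and the bookkeeping with $a$, $b$, and $n_0$ -- is routine.
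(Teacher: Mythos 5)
Your proof is correct, but it takes a genuinely different route from the paper. The paper also reduces the theorem to exhibiting an antichain with exponentially many $n$-vertex members and then taking an arbitrary subset with $f_n$ graphs per level; but its antichain is the set of all connected $4$-regular graphs (incomparability is immediate: a connected $4$-regular induced subgraph of a connected $4$-regular graph must be a whole component), and the exponential lower bound on the number of such graphs per level is imported from the asymptotic enumeration literature (McKay, Wormald). You instead build an explicit antichain $\{G_w\}$ of gadget-decorated cycles, prove incomparability by hand via the block decomposition (every cycle lies in a block, the blocks are the central $C_k$ and the per-vertex $C_3/C_4$ gadgets, and a triangle and an induced $C_4$ are mutually non-embeddable, so an embedding forces a dihedral relabelling of the code word), and get the exponential count from central binomial coefficients divided by the $2k$ dihedral symmetries. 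What the paper's approach buys is brevity; what yours buys is a self-contained, elementary argument with explicit constants, plus a cleaner treatment of the bookkeeping the paper glosses over (the identification of forbidden sequences with level sizes of antichains, and the small-$n$ regime handled by shrinking $a$ so that $f_n=0$ below the threshold $n_0$). Both arguments rely on the same standard fact, worth stating explicitly in either write-up, that for an antichain $\mathcal{A}$ the minimal forbidden induced subgraphs of the class of graphs omitting $\mathcal{A}$ are exactly the members of $\mathcal{A}$.
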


\begin{proof}
The set of connected $4$-regular graphs is an antichain. To see this, let $G,H$ be non-isomorphic connected $4$-regular graphs. Suppose $H$ is an induced subgraph of $G$.  We may assume $|V(H)| < |V(G)|$.  
Since $H$ is an induced subgraph of $G$ and since $H$ is connected, $H$ is a component of $G$. This is a contradiction since $G$ is connected and has more vertices than $H$. 

The asymptotics of the number of connected $4$-regular graphs is known and the theorem follows from that (see \cite{BDM}, \cite{NCW}). We can choose any set $S$ of connected $4$-regular graphs and the class of graphs not containing 
any graph in $S$ is certainly heredittary, and has forbidden sequence $S$. So all we have to do is to choose $f_n$ connected $4$-regular graphs on $n$ vertices. As long are there are at least $f_n$ connected 
$4$-regular graph on $n$ vertices, this will be possible, and hence $f_n \leq ae^{bn}$ suffices. 
\end{proof}

\end{section}

\begin{section}
{Nice graphs}

A graph is perfect if  for every induced subgraph $H$ of $G$, $\chi(H) - \omega(H) = 0$.  We relax this definition to get a bigger class of graphs. A graph G is said to be nice if for every induced subgraph $H$ of $G$, $\chi(H) - \omega(H) \in \{0,1\}$. 
We study this new class of graphs.

\begin{proposition}\label{PERFECT}
Every perfect graph is nice.
\end{proposition}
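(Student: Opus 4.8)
The plan is essentially to unwind the two definitions and observe that ``nice'' is a literal relaxation of ``perfect''. First I would recall that, in the formulation adopted at the start of this section, a graph $G$ is perfect exactly when every induced subgraph $H$ of $G$ satisfies $\chi(H) - \omega(H) = 0$, while $G$ is nice exactly when every induced subgraph $H$ of $G$ satisfies $\chi(H) - \omega(H) \in \{0,1\}$. Then I would fix an arbitrary perfect graph $G$ and an arbitrary induced subgraph $H$ of $G$. By perfection, $\chi(H) - \omega(H) = 0$, and since $0 \in \{0,1\}$ this is precisely the condition demanded by the definition of niceness. As $H$ was an arbitrary induced subgraph, $G$ is nice, which establishes the proposition.

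I do not expect any real obstacle here: the entire content is the trivial set containment $\{0\} \subseteq \{0,1\}$, so the argument is a one-line deduction from the definitions. The only point worth a moment's attention is to make sure one uses the hereditary ``$\chi(H) = \omega(H)$ for every induced subgraph $H$'' description of perfect graphs — which is exactly the definition the paper has just stated — rather than some equivalent but less convenient characterization; with that description in hand there is nothing further to prove. If one wishes, one could append the remark that the inclusion of classes is proper, since, for example, $C_5$ is nice but not perfect, but this is not needed for the statement.
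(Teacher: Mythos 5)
Your proof is correct and is exactly the argument the paper has in mind (the paper omits a proof entirely, treating the statement as immediate from the definitions): perfection gives $\chi(H)-\omega(H)=0$ for every induced subgraph $H$, and $0\in\{0,1\}$ is all that niceness requires. Nothing further is needed.
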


\begin{proposition}\label{TRIPARTITE}
Every tripartite graph is nice.
\end{proposition}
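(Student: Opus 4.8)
The plan is to exploit the fact that $3$-colourability is a hereditary property. If $G$ is tripartite, i.e.\ $\chi(G)\le 3$, and $H$ is any induced subgraph of $G$, then the restriction of a proper $3$-colouring of $G$ to $V(H)$ is a proper colouring of $H$, so $\chi(H)\le 3$ as well. Hence it suffices to prove the purely numerical claim that every graph $H$ with $\chi(H)\le 3$ satisfies $\chi(H)-\omega(H)\in\{0,1\}$, since this can then be applied to every induced subgraph of $G$ (including $G$ itself).

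To establish that claim I would argue by cases on $\omega(H)$, using the elementary bound $\omega(H)\le\chi(H)\le 3$, which already rules out $\omega(H)\ge 4$. First, if $\omega(H)\le 1$ then $H$ is edgeless, so $\chi(H)\le 1$ and the difference is $0$. Next, if $\omega(H)=3$ then $H$ contains a triangle, so $3\le\omega(H)\le\chi(H)\le 3$, forcing $\chi(H)=3$ and $\chi(H)-\omega(H)=0$. Finally, if $\omega(H)=2$ then $H$ has an edge, so $2\le\chi(H)\le 3$, and the difference lies in $\{0,1\}$. These cases are exhaustive, which completes the argument.

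There is essentially no obstacle: the only ingredients are the definition of tripartite, the inequality $\omega\le\chi$, and the observation that induced subgraphs of tripartite graphs are tripartite. The ``hardest'' point is merely noticing that $\omega(H)\le\chi(H)\le 3$ makes the case analysis finite, so that the small slack allowed in the definition of \emph{nice} is enough to absorb the gap between $\chi$ and $\omega$ in the one nontrivial case $\omega(H)=2$.
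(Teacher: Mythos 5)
Your argument is correct: induced subgraphs of a tripartite graph are tripartite, and the case analysis on $\omega(H)\in\{0,1,2,3\}$ with $\omega(H)\le\chi(H)\le 3$ gives $\chi(H)-\omega(H)\in\{0,1\}$ in every case. The paper states this proposition without proof, evidently regarding it as immediate, and your write-up is exactly the routine verification that was left implicit.
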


\begin{proposition}\label{PLANAR}
Every planar graph is nice.
\end{proposition}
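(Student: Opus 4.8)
The plan is to prove that every planar graph $G$ is nice by showing $\chi(H) - \omega(H) \le 1$ for every induced subgraph $H$ of $G$. Since the class of planar graphs is hereditary (deleting vertices preserves planarity), it suffices to prove the bound for $G$ itself; the statement for all induced subgraphs then follows by applying the same argument to each $H$, which is again planar. So the real task reduces to: for every planar graph $G$, $\chi(G) \le \omega(G) + 1$.

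I would split into cases according to $\omega(G)$. If $\omega(G) \ge 4$, then by the Four Color Theorem $\chi(G) \le 4 \le \omega(G)$, so certainly $\chi(G) - \omega(G) \le 1$. If $\omega(G) = 3$, then again $\chi(G) \le 4 = \omega(G) + 1$ by the Four Color Theorem. If $\omega(G) = 2$, i.e. $G$ has an edge but is triangle-free, then by Grötzsch's theorem every triangle-free planar graph is $3$-colorable, so $\chi(G) \le 3 = \omega(G) + 1$. Finally if $\omega(G) \le 1$, then $G$ has no edges, $\chi(G) \le 1 = \omega(G)$, and the inequality is trivial. In every case $\chi(G) - \omega(G) \in \{0, 1\}$, and since $\chi(G) \ge \omega(G)$ always, we get $\chi(G) - \omega(G) \in \{0,1\}$, as required.

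The only case that requires a genuine theorem beyond the Four Color Theorem is $\omega(G) = 2$, which is exactly where Grötzsch's theorem enters; without it one would only get $\chi \le 4$ and hence $\chi - \omega \le 2$, which is not good enough. So I expect the $\omega = 2$ (triangle-free) case to be the main point of the proof, and invoking Grötzsch's theorem is the key step. It is worth remarking that this argument does rely on the Four Color Theorem; a proof avoiding it for the $\omega \in \{3,4\}$ cases seems unlikely, since bounding $\chi$ by $\omega + 1 \le 5$ for planar graphs with a triangle is close in strength to the Five Color Theorem (which would suffice when $\omega \ge 4$ but not when $\omega = 3$). I would simply cite the Four Color Theorem and Grötzsch's theorem and assemble the case analysis above.
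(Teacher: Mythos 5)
Your proof is correct and uses exactly the same two ingredients as the paper (the Four Color Theorem plus Gr\"otzsch's theorem); the only cosmetic difference is that you case on $\omega(G)$ and apply Gr\"otzsch directly, while the paper cases on $\chi(G)$ and uses Gr\"otzsch contrapositively (if $\chi=4$ then $G$ has a triangle), handling the $\chi\le 3$ case via the fact that tripartite graphs are nice. Your reduction to $G$ itself via hereditariness of planarity is also fine.
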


\begin{proof}
Let $G$ be a planar graph. If $G$ has chromatic number at most $3$, we are done by Proposition \ref{TRIPARTITE}. If not, we know by the Four Color Theorem that G has chromatic number $4$. By Gr\"{o}tzsch's theorem, $G$ 
must have a triangle, and hence $\omega(G) \in \{3,4\}$. So we have $\chi(G) - \omega(G) \in \{0,1\}$, and hence $G$ is nice. 
\end{proof}

\begin{proposition}\label{LINEGRAPH}
Every line graph is nice.
\end{proposition}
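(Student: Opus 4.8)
The plan is to invoke Vizing's theorem together with the structure of line graphs. Recall that the line graph $L(H)$ of a graph $H$ has the edges of $H$ as its vertices, with two adjacent precisely when the corresponding edges of $H$ share an endpoint. First I would observe that the class of line graphs is hereditary: an induced subgraph of $L(H)$ is itself the line graph of a subgraph of $H$ (namely, delete from $H$ the edges not in the chosen vertex set). Hence it suffices to show directly that $\chi(L(H)) - \omega(L(H)) \in \{0,1\}$ for every graph $H$, since every induced subgraph of a line graph is again a line graph and the same bound will apply to it.

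Next I would translate the two graph parameters of $L(H)$ into parameters of $H$. A proper coloring of $L(H)$ is exactly a proper edge coloring of $H$, so $\chi(L(H)) = \chi'(H)$, the edge chromatic index of $H$. By Vizing's theorem, $\chi'(H) \le \Delta(H) + 1$, where $\Delta(H)$ is the maximum degree of $H$. On the other side, I must lower-bound $\omega(L(H))$: a clique in $L(H)$ is a set of pairwise-intersecting edges of $H$. A star at a vertex of maximum degree $\Delta(H)$ gives a clique of size $\Delta(H)$ in $L(H)$, so $\omega(L(H)) \ge \Delta(H)$. Combining, $\omega(L(H)) \ge \Delta(H) \ge \chi'(H) - 1 = \chi(L(H)) - 1$, so $\chi(L(H)) - \omega(L(H)) \le 1$; and of course $\chi \ge \omega$ always, giving the difference is in $\{0,1\}$.

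I would then need to handle the one subtlety: the bound $\omega(L(H)) \ge \Delta(H)$ uses only stars, but $L(H)$ can also have cliques coming from triangles of $H$ (three mutually incident edges forming a $K_3$ in $L(H)$). This does not cause a problem for the inequality we want, since we only need a lower bound on $\omega(L(H))$ and the star already delivers $\Delta(H)$; but I should remark that one does not need to determine $\omega(L(H))$ exactly. A second minor point is that the argument should be applied to every induced subgraph, which as noted is again a line graph $L(H')$ for some $H'$, so the inequality $\chi - \omega \le 1$ holds uniformly; thus $G = L(H)$ is nice.

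The main obstacle, such as it is, is simply making sure the heredity observation is stated cleanly — that induced subgraphs of line graphs are line graphs (of subgraphs obtained by deleting edges, not vertices, of $H$) — and then correctly matching $\chi(L(H)) = \chi'(H)$ with the clique lower bound $\omega(L(H)) \ge \Delta(H)$ so that Vizing's $+1$ is absorbed. No hard computation is involved; the proof is a two-line consequence of Vizing once the dictionary between $L(H)$ and $H$ is set up.
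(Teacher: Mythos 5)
Your argument is correct and is exactly the route the paper intends: the paper's proof is the single line ``This follows from Vizing's Theorem,'' and your write-up simply supplies the omitted details (heredity of line graphs, $\chi(L(H))=\chi'(H)\le\Delta(H)+1$, and $\omega(L(H))\ge\Delta(H)$ via a maximum-degree star). Nothing is missing; the edge cases you flag are harmless.
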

\begin{proof}
This follows from Vizing's Theorem. 
\end{proof}

\begin{proposition}\label{TRIANGLE-FREE}
A triangle-free graph with chromatic number at least $4$ is not nice. 
\end{proposition}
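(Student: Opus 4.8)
The statement to prove is: a triangle-free graph $G$ with $\chi(G) \geq 4$ is not nice. The plan is to exhibit an induced subgraph $H$ of $G$ with $\chi(H) - \omega(H) \geq 2$. Since $G$ is triangle-free, every induced subgraph $H$ is also triangle-free, so $\omega(H) \leq 2$. Thus it suffices to find an induced subgraph $H$ with $\chi(H) \geq 4$ and $\omega(H) = 2$ — that is, $H$ has an edge and is triangle-free. But any induced subgraph with $\chi \geq 4 > 1$ automatically has an edge, so the entire task reduces to: find an induced subgraph $H$ of $G$ with $\chi(H) \geq 4$. Taking $H = G$ itself works immediately, since $\chi(G) \geq 4$ by hypothesis and $\omega(G) = 2$ (as $G$ is triangle-free but must contain an edge, again because $\chi(G) \geq 4$).

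So the proof is essentially a one-line observation: for the induced subgraph $H = G$, we have $\chi(H) - \omega(H) = \chi(G) - 2 \geq 4 - 2 = 2 \notin \{0,1\}$, contradicting the definition of nice. The only point needing a word of care is confirming $\omega(G) = 2$ rather than $\omega(G) \leq 2$: since $\chi(G) \geq 4$, $G$ is not edgeless, hence $\omega(G) \geq 2$; combined with triangle-freeness ($\omega(G) \leq 2$) this gives equality. Then the witnessing subgraph is $G$ itself.

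There is no real obstacle here — the statement is a near-immediate consequence of the definitions, included presumably as a companion/contrast to Propositions \ref{PERFECT}–\ref{LINEGRAPH} to show the class of nice graphs is genuinely restrictive among triangle-free graphs (e.g., it rules out triangle-free graphs of large chromatic number, such as Mycielski graphs or Kneser graphs $K(n,k)$ with large chromatic number). If one wanted to phrase it more robustly, one could note that in fact \emph{every} induced subgraph $H$ of $G$ with $\chi(H) \geq 4$ serves as a witness, but a single witness suffices. I would write the proof in two sentences: first establish $\omega(G) = 2$, then evaluate $\chi(G) - \omega(G) \geq 2$ to conclude $G$ is not nice.
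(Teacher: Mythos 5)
Your proposal is correct and follows essentially the same one-line argument as the paper: triangle-freeness gives $\omega(G)\leq 2$, so $\chi(G)-\omega(G)\geq 2$ and $G$ itself witnesses non-niceness. The extra remark pinning down $\omega(G)=2$ exactly is harmless but unnecessary, since the inequality $\omega(G)\leq 2$ already yields $\chi(G)-\omega(G)\geq 2$.
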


\begin{proof}
Let $G$ be a triangle-free graph with $\chi(G) \geq 4$. Since $G$ is triangle-free $\omega(G) \leq 2$. Hence $\chi(G) - \omega(G) \geq 2$, and so $G$ is not nice. 
\end{proof}

\begin{corollary}
A triangle-free graph is nice if and only if its chromatic number is at most $3$. 
\end{corollary}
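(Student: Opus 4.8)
The plan is to prove both directions of the biconditional, each of which is essentially immediate given the propositions already established in this section. The statement to prove is: a triangle-free graph $G$ is nice if and only if $\chi(G) \leq 3$.

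For the forward direction, I would argue by contraposition. Suppose $G$ is triangle-free and $\chi(G) \geq 4$. Then $G$ itself is an induced subgraph of $G$ with $\chi(G) - \omega(G) \geq 4 - 2 = 2 \notin \{0,1\}$, so $G$ is not nice. This is exactly the content of Proposition \ref{TRIANGLE-FREE}, so the forward implication follows directly by citing it.

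For the reverse direction, suppose $G$ is triangle-free with $\chi(G) \leq 3$. Then $G$ is $3$-colorable, hence tripartite, and Proposition \ref{TRIPARTITE} tells us every tripartite graph is nice. So $G$ is nice. (One small point to make sure of: ``tripartite'' should be read as ``$3$-colorable,'' i.e. admitting a partition into three stable sets, which is precisely $\chi \leq 3$; this is how the term is used in Proposition \ref{PLANAR}'s proof.)

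There is no real obstacle here — the corollary is a direct packaging of Propositions \ref{TRIPARTITE} and \ref{TRIANGLE-FREE}. The only thing to be careful about is that the hypothesis ``triangle-free'' is used in both directions: in the forward direction it forces $\omega(G) \leq 2$ so that a large chromatic number actually produces a gap of at least $2$, and in the reverse direction it is not even needed (any $3$-colorable graph is nice), but stating it keeps the corollary symmetric. I would write the proof in two or three sentences, simply invoking the two propositions.

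\begin{proof}
If $\chi(G) \geq 4$, then since $G$ is triangle-free, Proposition \ref{TRIANGLE-FREE} shows $G$ is not nice. Conversely, if $\chi(G) \leq 3$, then $G$ is tripartite, and hence nice by Proposition \ref{TRIPARTITE}.
\end{proof}
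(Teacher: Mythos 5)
Your proof is correct and takes essentially the same route as the paper, which also deduces the corollary directly from Propositions \ref{TRIPARTITE} and \ref{TRIANGLE-FREE}. Nothing further is needed.
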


\begin{proof}
By Propositions \ref{TRIPARTITE} and \ref{TRIANGLE-FREE}. 
\end{proof}

What properties of perfect graphs are also enjoyed by nice graphs? Lov\'{a}sz' perfect graph theorem that complementation preserves perfection is blatantly false for nice graphs. 

\begin{proposition}
The quantity $\chi - \omega$ can be arbitrarily large for the complement of a nice graph. 
\end{proposition}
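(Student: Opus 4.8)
The plan is to exhibit, for each $k$, a nice graph $G_k$ whose complement $\overline{G_k}$ satisfies $\chi(\overline{G_k}) - \omega(\overline{G_k}) \geq k$. The natural source of nice graphs with an easily understood complement is triangle-free graphs, since by the Corollary a triangle-free graph is nice exactly when $\chi \leq 3$. So I would take $G$ to be a triangle-free graph with $\chi(G) \leq 3$ on $N$ vertices: then $G$ is nice, and I must analyze $\overline{G}$. Here $\omega(\overline{G}) = \alpha(G)$, the independence number of $G$, while $\chi(\overline{G})$ is the clique cover number $\overline{\chi}(G)$, i.e. the minimum number of cliques of $G$ needed to cover $V(G)$; since $G$ is triangle-free, every clique has at most $2$ vertices, so $\overline{\chi}(G) \geq N/2$ (in fact $\overline{\chi}(G) = N - \mu(G)$ where $\mu(G)$ is the matching number). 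Thus $\chi(\overline{G}) - \omega(\overline{G}) \geq N/2 - \alpha(G)$, and the task reduces to finding triangle-free, $3$-colorable graphs on $N$ vertices with independence number $\alpha(G) \leq N/2 - k$, i.e. with no very large independent set relative to $N$.

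The cleanest concrete choice is a disjoint union: let $G_k$ be the disjoint union of $m$ copies of the $5$-cycle $C_5$. Each $C_5$ is triangle-free with $\chi = 3$, and a disjoint union of $3$-colorable triangle-free graphs is again triangle-free and $3$-colorable, hence nice. For this graph $N = 5m$, $\alpha(G_k) = 2m$ (each $C_5$ contributes an independent set of size $2$), and $\overline{\chi}(G_k) = 5m - \mu(G_k) = 5m - 2m = 3m$ (each $C_5$ has a matching of size $2$). Therefore $\chi(\overline{G_k}) - \omega(\overline{G_k}) = 3m - 2m = m$, which is unbounded as $m \to \infty$. I would verify the two equalities $\alpha(C_5) = 2$ and $\mu(C_5) = 2$ (both immediate), and cite Lemma~\ref{BOUNDS} or just the direct identity $\chi(\overline{G}) = \overline{\chi}(G)$ and $\omega(\overline{G}) = \alpha(G)$, plus the Gallai-type identity $\overline{\chi}(G) = |V(G)| - \mu(G)$ valid for triangle-free $G$ (every edge is a maximal clique).

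The only real thing to be careful about is confirming that $G_k$ is genuinely nice, not merely that $\chi(G_k) - \omega(G_k) \in \{0,1\}$: niceness requires the inequality for every induced subgraph. But every induced subgraph of $G_k$ is again a disjoint union of (induced subgraphs of) $C_5$'s, hence triangle-free with chromatic number at most $3$, so it is nice by the Corollary; equivalently, the class of triangle-free $3$-colorable graphs is hereditary. So there is no obstacle there. The main (and essentially only) point requiring a sentence of justification is the computation of the clique cover number of a triangle-free graph as $|V| - \mu$, which I would state as: in a triangle-free graph the only cliques are vertices and edges, so a minimum clique cover uses a maximum matching together with the unmatched vertices as singletons.

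\begin{proof}
For each positive integer $m$, let $G_m$ be the disjoint union of $m$ copies of the $5$-cycle $C_5$. Since $C_5$ is triangle-free with chromatic number $3$, and a disjoint union of such graphs is again triangle-free with chromatic number $3$, and since every induced subgraph of $G_m$ is a disjoint union of induced subgraphs of $C_5$ (hence again triangle-free with chromatic number at most $3$), the Corollary above shows that $G_m$ is nice.

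Now consider $\overline{G_m}$. We have $\omega(\overline{G_m}) = \alpha(G_m) = 2m$, since a maximum independent set of $G_m$ consists of a maximum independent set (of size $2$) in each copy of $C_5$. For the chromatic number, $\chi(\overline{G_m})$ equals the minimum number of cliques of $G_m$ covering $V(G_m)$. As $G_m$ is triangle-free, every clique of $G_m$ is a single vertex or a single edge, so a minimum clique cover is obtained from a maximum matching $M$ together with the $|V(G_m)| - 2|M|$ unmatched vertices as singletons; thus $\chi(\overline{G_m}) = |V(G_m)| - \mu(G_m)$. Here $|V(G_m)| = 5m$ and $\mu(G_m) = 2m$ (each copy of $C_5$ has a maximum matching of size $2$), so $\chi(\overline{G_m}) = 5m - 2m = 3m$.

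Therefore $\chi(\overline{G_m}) - \omega(\overline{G_m}) = 3m - 2m = m$, which is arbitrarily large.
\end{proof}
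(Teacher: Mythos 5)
Your proof is correct, but it takes a genuinely different route from the paper. The paper's example is the line graph of $K_n$ (for $n \geq 7$): it is nice by Proposition \ref{LINEGRAPH} (Vizing), and its complement is the Kneser graph of $2$-subsets of an $n$-set, for which the paper quotes $\chi(\overline{L(K_n)}) = n-2$ and $\omega(\overline{L(K_n)}) = \lfloor n/2 \rfloor$, so the gap grows like $n/2$; note that the value $n-2$ for the chromatic number of this complement is itself a nontrivial fact that the paper does not justify. Your construction, the disjoint union of $m$ copies of $C_5$, gets niceness from the corollary that a triangle-free graph is nice if and only if its chromatic number is at most $3$ (so you do not even need the extra sentence about induced subgraphs --- the corollary already encapsulates heredity), and the complement's parameters are computed by entirely elementary means: $\omega$ of the complement is $\alpha(G_m) = 2m$, and $\chi$ of the complement is the clique cover number $|V| - \mu = 3m$, using only the observation that cliques in a triangle-free graph are vertices and edges. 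So your argument is more self-contained and the gap is linear in the number of vertices, whereas the paper's example leans on line graphs and (implicitly) the chromatic number of a Kneser-type graph; what the paper's choice buys is a single natural graph family tied to its earlier discussion of line graphs and $\chi_p(L(K_n))$. Both are complete proofs of the proposition.
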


\begin{proof}
The line graph of $K_n$ ($n \geq 7$ ) is nice by Proposition \ref{LINEGRAPH}. But $\chi(\overline{L(K_n)}) = n-2$ and $\omega(\overline{L(K_n)}) = \lfloor \frac{n}{2} \rfloor$. 
\end{proof}

\begin{corollary}
The class of nice graphs is not closed under complementation.
\end{corollary}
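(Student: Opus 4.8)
The plan is to read this corollary directly off the proposition immediately preceding it, so the argument is a short piece of bookkeeping. First I would observe that the niceness condition, applied to the induced subgraph $H = G$ itself, forces $\chi(G) - \omega(G) \le 1$ for every nice graph $G$. Consequently, to show that the class of nice graphs is not closed under complementation it is enough to exhibit one graph $G$ that is nice while its complement satisfies $\chi(\overline G) - \omega(\overline G) \ge 2$; that single inequality already certifies, via the subgraph $\overline G$ of $\overline G$, that $\overline G$ is not nice.

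Next I would take the family already analysed in the preceding proposition, namely $G = L(K_n)$ with $n \ge 7$. By Proposition \ref{LINEGRAPH} (an immediate consequence of Vizing's theorem), every line graph is nice, so each $L(K_n)$ is nice. For the complement, the vertices of $\overline{L(K_n)}$ are the edges of $K_n$, adjacent exactly when disjoint; a clique is then a matching of $K_n$, giving $\omega(\overline{L(K_n)}) = \lfloor n/2 \rfloor$, while an independent set is a clique of $L(K_n)$, i.e.\ a star or a triangle, so $\chi(\overline{L(K_n)})$ is the least number of stars and triangles partitioning $E(K_n)$, which equals $n - 2$ as recorded in the previous proof. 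Hence $\chi(\overline{L(K_n)}) - \omega(\overline{L(K_n)}) = \lceil n/2 \rceil - 2$, which is at least $2$ once $n \ge 7$ and grows without bound. Therefore $\overline{L(K_n)}$ is not nice, and the corollary follows.

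There is no genuine obstacle here: all the content sits in Proposition \ref{LINEGRAPH} and in the two elementary computations of $\omega$ and $\chi$ for $\overline{L(K_n)}$ that the previous proposition already supplies. The only thing to double-check is that the difference on the complement truly exceeds $1$, not merely that it is unbounded over the family — but unboundedness makes even that automatic, so one may simply cite the preceding proposition and conclude.
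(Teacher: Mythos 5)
Your proposal is correct and is essentially the paper's argument: the corollary is meant to follow from the preceding proposition, whose proof uses exactly the same witness $L(K_n)$ (nice by Vizing's theorem) with $\chi(\overline{L(K_n)}) = n-2$ and $\omega(\overline{L(K_n)}) = \lfloor n/2 \rfloor$, so the complement violates niceness already at $H=\overline{G}$ once $n \ge 7$. Your only addition is making explicit the bookkeeping that a difference of at least $2$ on the whole graph suffices, which the paper leaves implicit.
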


Determining the chromatic number, clique number, stability number, and clique cover number of a perfect graph can be done in polynomial time. 

\begin{proposition}
The problem of determining the stability number and chromatic number of a nice graph are both NP-complete.
\end{proposition}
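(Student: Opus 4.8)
The plan is to reduce both problems to known NP-complete problems about triangle-free (or sparse) graphs, exploiting the fact that such graphs are always nice provided their chromatic number stays small, and that we can pad a graph to force this. First I would handle the stability number. The key observation is that a triangle-free graph with chromatic number at most $3$ is nice by the Corollary above, so it suffices to show that computing the independence number is NP-hard on triangle-free graphs of chromatic number at most $3$. Poljak's classical construction (subdividing every edge of an arbitrary graph twice) produces a triangle-free graph on which the independence number is a simple affine function of the original independence number; moreover a double subdivision of any graph is $3$-colorable (in fact the subdivision graph of a loopless multigraph is always $3$-colorable, and one can even keep it triangle-free by subdividing enough). So the independence-number problem is NP-complete already on nice graphs, and it obviously lies in NP.

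For the chromatic number, the natural route is to invoke the Maffray--Preissmann theorem \cite{MP} already cited in the paper: deciding whether a triangle-free graph is $4$-colorable is NP-complete. A triangle-free graph $G$ with $\chi(G)\le 3$ is nice, and one with $\chi(G)=4$ is \emph{not} nice by Proposition~\ref{TRIANGLE-FREE}; but that dichotomy alone only shows that ``$\chi\le 3$'' is co-NP-hard to certify within the class, not that computing $\chi$ on nice graphs is hard. To get hardness of the chromatic number function on nice graphs I would instead take a triangle-free instance $G$ and form the disjoint union (or join with a suitable perfect padding gadget) that is guaranteed to be nice regardless of $\chi(G)$: for instance, replace the ``hard'' part by something whose niceness we control. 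Concretely, given a triangle-free $G$, the graph $G' = G \vee K_t$ (join with a clique of size $t$) has $\chi(G') = \chi(G) + t$ and $\omega(G') = \omega(G) + t \le t+2$; choosing $t$ large is not what helps, so instead I would argue directly that $G \vee K_1$, i.e. adding a universal vertex, keeps $\chi - \omega$ controlled: $\chi(G\vee K_1) = \chi(G)+1$ and $\omega(G \vee K_1) = \omega(G)+1 = 3$, while every induced subgraph is either inside $G$ (triangle-free, so needs $\chi\le\chi(G)$) or of the same form. This still fails to force niceness when $\chi(G)=4$.

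Given that genuine obstacle, the cleanest correct approach is: for the chromatic number, reduce from $3$-colorability of triangle-free graphs of \emph{bounded} maximum degree, where the input is promised to be either $3$-chromatic or exactly $4$-chromatic (this promise version is still NP-hard by Maffray--Preissmann together with the fact that triangle-free graphs have chromatic number at most about $\sqrt{n}$, but more simply one shifts by a clique). Take $G$ triangle-free and set $H = G \vee K_1$; then $H$ has $\omega(H)=3$ and $\chi(H)\in\{\chi(G)+1\}$. Crucially, \emph{restricting to instances $G$ with $\chi(G)\le 3$ that we must distinguish from $\chi(G)=4$}, the graph $H$ is nice exactly when $\chi(G)\le 3$, and when $\chi(G)=4$ we do not need $H$ to be nice: the reduction only needs the promise that the input \emph{is} a nice graph to be delivered in the yes-case, and NP-completeness of a problem restricted to a class only requires that the hard instances we feed it lie in the class. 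Thus I would phrase it as: the problem ``given a nice graph, is its chromatic number $\le k$'' is NP-complete, by mapping a triangle-free $G$ with the standard promise to $H=G\vee K_1$ and asking whether $\chi(H)\le 4$; all yes-instances are nice, and membership in NP is clear.

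The main obstacle, and the step I would flag, is exactly this subtlety about what ``NP-complete for a class'' should mean: one must be careful that the instances produced by the reduction actually lie in the class of nice graphs (at least in the yes-case, and ideally always), since niceness is itself a non-trivial hereditary condition that is plausibly hard to verify. The safest write-up verifies niceness of every reduction output directly: for the stability number, double-subdivide and check the resulting triangle-free graph is $3$-colorable (hence nice by the Corollary); for the chromatic number, one can sidestep the issue entirely by reducing from $3$-colorability of triangle-free planar graphs (NP-complete) to $3$-colorability of \emph{planar} graphs, which are nice by Proposition~\ref{PLANAR} unconditionally, so that \emph{every} reduction output is nice no matter the answer. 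That last variant — exploiting Proposition~\ref{PLANAR} to make the whole reduced instance nice regardless of the answer — is the key trick that makes the chromatic-number half clean, and I would present the proof in that form.
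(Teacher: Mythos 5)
Your final argument is sound and, for the chromatic number, converges on exactly the paper's proof: the paper's entire argument is the observation that both problems are already NP-complete for planar graphs, which are nice by Proposition \ref{PLANAR}, so \emph{every} instance handed to the nice-graph problem lies in the class unconditionally --- the ``key trick'' you arrive at in your last paragraph. Your stability-number half takes a genuinely different (and also correct) route: Poljak's double subdivision gives a triangle-free graph $H$ with $\alpha(H)=\alpha(G)+|E(G)|$, and coloring the original vertices $1$ and the two subdivision vertices of each edge $2$ and $3$ shows $\chi(H)\le 3$, so $H$ is nice by the corollary on triangle-free graphs; this works, but it is more machinery than needed, since maximum stable set is NP-complete already on planar graphs, which handles both halves uniformly.

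One concrete error to fix in your chromatic-number discussion: ``$3$-colorability of triangle-free planar graphs'' is not NP-complete --- by Gr\"otzsch's theorem (which the paper itself invokes in the proof of Proposition \ref{PLANAR}) every triangle-free planar graph is $3$-colorable, so that decision problem is trivial. Simply reduce from $3$-colorability of planar graphs, which is NP-complete; since planar graphs are $4$-colorable and $2$-colorability is polynomial, computing $\chi$ exactly on planar (hence nice) graphs is NP-hard. With that correction the detour through Maffray--Preissmann, joins with $K_1$, and promise versions is unnecessary, and the ``only yes-instances need to be nice'' framing can be dropped in favor of the cleaner fact you end with: all reduction outputs are nice because they are planar.
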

\begin{proof}
The two problems are both NP-complete already for planar graphs. 
\end{proof}

\begin{problem}
Is there a polynomial-time algorithm to determine the clique number of a nice graph?
\end{problem}

\begin{problem}
Is there a polynomial-time algorithm to determine the clique cover number of a nice graph?
\end{problem}

\begin{problem}
(Forbidden Induced Subgraph Characterization) Determine the set of minimal non-nice graphs. 
\end{problem}

\begin{problem}
Can nice graphs be recognized in polynomial time? 
\end{problem}

\begin{problem}
Characterize claw-free nice graphs. 
\end{problem}

\end{section}

\begin{section}
{Weakening the Erd\H{o}s-Hajnal conjecture}

A famous open problem concerning induced subgraphs is the Erd\H{o}s-Hajnal conjecture. 

\begin{conjecture}[Erd\H{o}s-Hajnal 1989]
For every graph $H$, there exists $\epsilon(H) > 0$ such that every graph $G$ not containing $H$ as an induced subgraph has either a stable set or a clique of size at least $|V(G)|^{\epsilon(H)}$. 
\end{conjecture}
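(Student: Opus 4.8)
\emph{Proof proposal.} As stated this is the full Erd\H{o}s-Hajnal conjecture, one of the central open problems about induced subgraphs, so an honest plan here is a research program rather than a routine argument; I describe the natural lines of attack and where each stalls. The organizing heuristic is a \emph{pseudorandomness} principle: a graph $G$ on $n$ vertices with neither a clique nor a stable set of size $n^{\epsilon}$ is, by the off-diagonal Ramsey bounds, forced to resemble a random graph, and a genuinely random graph on $n$ vertices --- whose clique and independence numbers are only $O(\log n)$ --- contains every fixed $H$ as an induced subgraph. A proof would have to convert this heuristic into a theorem: show that forbidding a single fixed pattern $H$ imposes enough \emph{structure} to manufacture a clique or a stable set of polynomial size.

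Concretely I would proceed in three stages. First, reduce to the case that $H$ is \emph{prime}, i.e.\ has no homogeneous set with more than one and fewer than all of its vertices: by the Alon-Pach-Solymosi substitution lemma, if $H$ is built from a graph $H_0$ by substituting graphs $H_v$ at its vertices and each of $H_0$ and the $H_v$ has the Erd\H{o}s-Hajnal property, then so does $H$, and combined with induction on $|V(H)|$ this lets one assume $H$ is prime. Second, for prime $H$ try to establish the strong Erd\H{o}s-Hajnal property: every $H$-free graph $G$ has disjoint sets $A,B$ with $|A|,|B| \ge c\,|V(G)|$ such that $A$ is complete or anticomplete to $B$; a standard iteration then upgrades this to a clique or stable set of size $n^{\Omega(1)}$, settling the conjecture for such $H$. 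Third, since the strong property can hold only when $H$ or its complement is a forest --- already excluding $H = C_5$ --- fall back on the polynomial reformulations equivalent to the conjecture, namely the polynomial R\"odl property and the ``pure pair'' formulations of Fox and of Scott and Seymour, and aim only for sets of size $n^{\delta}$; for particular $H$ (recent work handles $C_5$ and various holes) this weaker target has proved more tractable.

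The main obstacle is that the pseudorandomness heuristic is only approximately true: explicit and random Ramsey graphs genuinely exist, so $H$-freeness cannot force ``fully random'' behavior, and the real content of the conjecture is the subtle claim that it nonetheless forces global structure. For almost every prime $H$ --- already for $H = P_5$, the five-vertex path --- no structure theorem of the needed strength is known, and the best bound valid for all $H$, a clique or stable set of size $2^{\Omega(\sqrt{\log n})}$ from the original recursive argument of Erd\H{o}s and Hajnal (recently improved by Buci\'c, Nguyen, Scott and Seymour to roughly $2^{\Omega(\sqrt{\log n \, \log\log n})}$), is still super-polynomially short of the target $n^{\epsilon}$. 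I would expect any genuine attempt to bog down exactly at the passage from sub-polynomial to polynomial size: bridging that gap appears to require a fundamentally new idea rather than a sharpening of the known arguments, so the realistic outcome of the plan above is partial --- the conjecture for further small or highly structured $H$, plus incremental improvements to the universal bound.
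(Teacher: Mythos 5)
You have correctly identified that this ``statement'' is the Erd\H{o}s--Hajnal conjecture itself, which the paper does not prove and does not claim to prove: it is quoted only as motivation for the weakenings proposed in that section (excluding all subdivisions of $H$, and the nice-graph problem), so there is no proof in the paper to compare your attempt against. Your write-up is an accurate survey of the known lines of attack (substitution via Alon--Pach--Solymosi, the strong/pure-pair route and its limitation to forests and their complements, the quasi-polynomial general bounds), but it is a research program, not a proof, and should not be presented as establishing the conjecture; within the context of this paper the honest conclusion is simply that the statement remains open.
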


Chudnovsky and Seymour proposed a weakening of this by excluding both a graph and its complement. We propose a different weakening.

\begin{conjecture}
For every graph $H$, there exists $\epsilon(H) > 0$ such that every graph $G$ not containing a subdivision of $H$ as an induced subgraph has either a stable set or a clique of size at least $|V(G)|^{\epsilon(H)}$. 

\end{conjecture}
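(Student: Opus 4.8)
The plan is to attack this by induction on $|V(H)|$, following the standard template for Erd\H{o}s--Hajnal-type statements: a dichotomy between ``$G$ is essentially sparse, so it has a large stable set'' and ``$G$ has rich connectivity, which we parley into an induced subdivision of $H$, a contradiction.'' First I would clear the reducible and trivial cases. If $|V(H)|\le 2$ the conclusion is immediate. If $H$ is complete, then already the (larger) class of $H$-free graphs satisfies the conclusion by Ramsey's theorem, with $\epsilon=1/(|V(H)|-1)$, so we are done. For disconnected $H$ one reduces to the connected components by the same separation idea used in reductions of the Erd\H{o}s--Hajnal conjecture itself: if $G$ has no induced subdivision of $H_1\cup H_2$, then either $G$ already has no induced subdivision of $H_1$ and the inductive hypothesis applies, or every induced subdivision of $H_1$ in $G$ intersects the neighbourhood of every induced subdivision of $H_2$, which one exploits via a neighbourhood/VC-type counting argument. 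Thus it suffices to treat connected $H$ on at least three vertices.

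For connected $H$ the engine should be a structural lemma in the spirit of Lemma~\ref{MAINLEMMA}: fix an arbitrary vertex $v$ and take the BFS levelling $(L_0,L_1,\dots)$ from $v$. The hypothesis ``no induced subdivision of $H$'' is \emph{self-strengthening} --- it forbids, inside $G$, a whole family of gadgets (a vertex joined by three internally disjoint induced paths to one far vertex; a long induced path both of whose ends see a prescribed small configuration; and so on). The aim is to show that, unless $G$ already contains a clique or a stable set of size $|V(G)|^{\epsilon(H)}$, those forbidden gadgets force the levelling to be shallow and narrow: each single level and each pair of consecutive levels should induce a graph with no induced subdivision of some strictly smaller graph (so the inductive hypothesis applies level by level), and only boundedly many levels can be nonempty. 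Combining a large stable set found inside one level with the bound on the number of levels --- together with the even/odd-level trick already used in the proof of Lemma~\ref{MAINLEMMA} --- would give the desired polynomial bound.

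Three families should be kept apart. When $H$ is a path, ``no induced subdivision of $H$'' is literally ``$P_{|V(H)|}$-free,'' so the conjecture here coincides exactly with the Erd\H{o}s--Hajnal conjecture for paths; the Bousquet--Lagoutte--Thomass\'e theorem (paths and antipaths) is the current state of the art, and the bare path case is the natural first target. When $H$ is a cycle $C_k$ with $k\ge 4$, the subdivisions of $H$ are precisely the holes of length at least $k$, so one wants a genuine strengthening of the $\chi$-boundedness proved in Section~2; note that the $2^{2^{\omega}}$ bound there is far too weak to extract a stable set of polynomial size (when $\omega\approx\log\log n$ it yields nothing useful), so a separate, Ramsey-flavoured argument is needed. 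When $H$ has a branch vertex the hypothesis is strongest and the problem should be most tractable --- for instance, the claw case lives inside the claw-free graphs.

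The main obstacle is the one that keeps even $P_5$-free Erd\H{o}s--Hajnal open: controlling inducedness while routing the many internally disjoint paths that a subdivision of $H$ requires. Paths found greedily carry chords and cross-edges, so converting ``$G$ has rich connectivity between the levels'' into ``$G$ has an induced subdivision of $H$'' needs a cleanup/sparsification step in the style of Scott--Seymour, and it is not clear that such a step respects the levelling structure used in the first part of the argument. A secondary difficulty is the disconnected-$H$ reduction, which is more delicate for the induced-subdivision relation than for the plain induced-subgraph relation. I expect a successful proof will have to interleave the levelling analysis with a Gy\'arf\'as-path / balanced-separator argument rather than treating ``find a large clique or stable set'' and ``find an induced subdivision'' as two cleanly separated alternatives.
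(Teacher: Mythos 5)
This statement is posed in the paper as an open conjecture --- a proposed weakening of the Erd\H{o}s--Hajnal conjecture --- and the paper gives no proof of it, so your proposal has to stand on its own; as written it is a research plan rather than a proof, and its essential steps are missing. The most concrete problem is the path case, which your induction cannot avoid: since every induced subdivision of $P_k$ contains an induced $P_k$, the class of graphs with no induced subdivision of $P_k$ is exactly the class of $P_k$-free graphs, so for $H$ a path the conjecture is not a weakening at all but the full Erd\H{o}s--Hajnal conjecture for paths, which is open for all long paths (and the Bousquet--Lagoutte--Thomass\'e theorem you invoke excludes both the path and its complement, so it does not settle this case). You notice the coincidence yourself but then list it as a ``natural first target,'' which concedes that the induction on $|V(H)|$ cannot currently be carried through: only the cases that already follow from Ramsey's theorem (complete $H$, $|V(H)|\le 2$) are genuinely established in your outline.

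The structural engine is also unsupported. The claim that, failing a polynomial-size stable set or clique, the BFS levelling must be ``shallow and narrow,'' with each level inducing a graph excluding an induced subdivision of some strictly smaller graph, is asserted rather than proved, and there is no reason a level $L_i$ should exclude anything smaller than $H$; in Lemma \ref{MAINLEMMA} what drops when one passes to the relevant subsets is the clique number, not the excluded structure, and levelling arguments of that kind produce bounds like $2^{2^{\omega}}$, which (as you yourself observe in the cycle case) are far too weak to extract stable sets of size $|V(G)|^{\epsilon}$. The disconnected-$H$ reduction is likewise only gestured at: for Erd\H{o}s--Hajnal proper, disjoint unions are handled by closure of the property under substitution (Alon--Pach--Solymosi), but no analogous closure is known for induced-subdivision-free classes and you do not supply the ``VC-type counting argument'' you mention. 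Finally, the step you flag as the main obstacle --- converting connectivity between levels into an induced subdivision of $H$ while keeping the paths induced and mutually unjoined --- is precisely the open heart of the problem; leaving it unresolved means the conjecture remains exactly as open after your argument as before it.
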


What about when $H$ is the $5$-cycle? We pose this as a problem.

\begin{problem}
 Find an $\epsilon > 0$, if one exists, such that every nice graph $G$ has either a stable set or a clique of size at least $|V(G)|^{\epsilon}$.

\end{problem}
\end{section}

\begin{section}
{A conjecture of Gy\'{a}rf\'{a}s}
Gy\'{a}rf\'{a}s asked in 1985 whether the class of graphs whose every induced subgraph satisfies $\alpha \omega \geq n-1$ is $\chi$-bounded. 
We observe that the $\chi$-boundedness of such a family with any constant instead of 1 follows from a recent result of Scott and Seymour.

Let $\mathcal{G}$ be the class of graphs whose every induced subgraph satisfies $\alpha \omega \geq n-1$. Gy\'{a}rf\'{a}s (\cite{AG}, Problem 6.8) asked whether $\mathcal{G}$ is $\chi$-bounded. Here we show that for every nonnegative integer $c$, $\mathcal{G}_c$ is  $\chi$-bounded, where $\mathcal{G}_c$ is the class of graphs whose every induced subgraph satisfies $\alpha \omega \geq n-c$. (Hajnal asked, and Lov\'{a}sz proved, that $\mathcal{G}_0$ is exactly the class of perfect graphs.)

\begin{theorem}
$\mathcal{G}_c$ is $\chi$-bounded. 
\end{theorem}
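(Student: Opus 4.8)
The plan is to derive the theorem from a recent result of Scott and Seymour after an elementary reduction, the point of which is to translate the hypothesis $\alpha\omega\ge n-c$ into a lower bound on the independence ratio of every induced subgraph.

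First I would record the consequence of the hypothesis that will be used. Let $G\in\mathcal{G}_c$, write $\omega=\omega(G)$, and let $H$ be any induced subgraph of $G$. Applying the defining inequality of $\mathcal{G}_c$ to $H$ and using $\omega(H)\le\omega$ gives $\alpha(H)\ge(|V(H)|-c)/\omega$. Now set $\epsilon=\epsilon(\omega,c):=\frac1{2\max(\omega,c)}$. If $|V(H)|\ge 2c$ then $(|V(H)|-c)/\omega\ge|V(H)|/(2\omega)\ge\epsilon|V(H)|$; and if $1\le|V(H)|<2c$ then $\alpha(H)\ge 1>|V(H)|/(2c)\ge\epsilon|V(H)|$. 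So in every case $\alpha(H)\ge\epsilon|V(H)|$: every induced subgraph of $G$ has independence ratio at least $\epsilon$, where $\epsilon$ depends only on $\omega(G)$ and $c$.

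I would then invoke the theorem of Scott and Seymour: for every $\epsilon>0$ there is an integer $N(\epsilon)$ such that any graph all of whose induced subgraphs have independence ratio at least $\epsilon$ has chromatic number at most $N(\epsilon)$. Applying this with $\epsilon=\epsilon(\omega(G),c)$ shows $\chi(G)\le N(\epsilon(\omega(G),c))$, a function of $\omega(G)$ and $c$ alone; hence $\mathcal{G}_c$ is $\chi$-bounded. (The passage from the case $c=1$ to arbitrary $c$ costs nothing, since the only change is in the constant $\epsilon$.)

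The substance is entirely in the quoted theorem, and that is the step I expect to be the main obstacle in any self-contained treatment. The obvious soft approach does not give a bound independent of the number of vertices: repeatedly deleting a maximum stable set — which, by the estimate above, always covers at least an $\epsilon$-fraction of the remaining vertices — only yields $\chi(G)=O(\epsilon^{-1}\log|V(G)|)$. What is really needed is the fact, supplied by Scott and Seymour, that a graph of sufficiently large chromatic number must contain an induced subgraph of small independence ratio; this is the non-trivial input, and everything else is bookkeeping.
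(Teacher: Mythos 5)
Your reduction to a statement about independence ratios is where the argument breaks. The ``theorem of Scott and Seymour'' you invoke --- that for every $\epsilon>0$ there is an $N(\epsilon)$ such that every graph all of whose induced subgraphs $H$ satisfy $\alpha(H)\ge\epsilon|V(H)|$ has chromatic number at most $N(\epsilon)$ --- is not a theorem of theirs, and it is in fact false. Your hypothesis is exactly that the Hall ratio $\max_H |V(H)|/\alpha(H)$ is bounded, and the question of whether chromatic (or even fractional chromatic) number is bounded by a function of the Hall ratio was answered in the negative: Dvo\v{r}\'ak, Ossona de Mendez and Wu constructed graphs with bounded Hall ratio and arbitrarily large fractional chromatic number (and Blumenthal, Lidick\'y, Martin, Norin, Pfender and Volec refuted Harris's linear version). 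Since bounded Hall ratio already forces bounded clique number, there is no way to rescue the step by pointing to the dependence of your $\epsilon$ on $\omega(G)$. So the one step you yourself identify as carrying all the substance does not exist.

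The deeper issue is that your first reduction throws away the strength of the hypothesis defining $\mathcal{G}_c$. The condition is $\alpha(H)\,\omega(H)\ge |V(H)|-c$ with $\omega(H)$ the clique number of the induced subgraph $H$ itself; replacing $\omega(H)$ by $\omega(G)$ is very lossy precisely on sparse induced subgraphs, where the condition is strongest (e.g.\ on triangle-free $H$ it forces $\alpha(H)\ge(|V(H)|-c)/2$). The paper's proof uses exactly this: if $G$ contained $c+1$ pairwise anticomplete odd holes, the subgraph $H$ they induce would have $\omega(H)=2$ and $\alpha(H)\omega(H)=|V(H)|-c-1<|V(H)|-c$, a contradiction; hence $G$ has no $c+1$ pairwise anticomplete odd holes, and the actual main result of Scott--Seymour (paper VII, on Gy\'arf\'as' complementation conjecture) is that graphs with no $c+1$ pairwise anticomplete odd holes form a $\chi$-bounded class. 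Some argument of this kind, exploiting the subgraph-level clique number rather than a global independence-ratio bound, is needed; your proposal as written has a genuine gap.
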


\begin{proof}
Let $G \in \mathcal{G}_c$. We show that $G$ does not contain $c+1$ pairwise anticomplete odd holes. Suppose  otherwise. Let $2l_1+1, \cdots , 2l_{c+1}+1$ be the hole lengths. 
Then the subgraph $H$ induced by those vertices will have $\alpha(H) = l_1 + \cdots + l_{c+1}$, $\omega(H) = 2$, and $|V(H)| = 2(l_1 + \cdots l_{c+1}) + c+1$, a contradiction to $\alpha(H)\omega(H) \geq |V(H)|-c$. 
Hence $G$ does not contain $c+1$ pairwise anticomplete odd holes. But the $\chi$-boundedness of graphs not containing $c+1$ 
pairwise anticomplete odd holes is the main result of Scott-Seymour \cite{SS7} that they use to prove  Gy\'{a}rf\'{a}s' complementation conjecture (\cite{AG}, Conjecture 6.3). 
\end{proof}

It is worth noting that the $\chi$-boundedness of the class of graphs not containing $c+1$ pairwise anticomplete odd holes implies the $\chi$-boundedness of two classes, thus resolving Conjecture 6.3 
and solving Problem 6.8 in Gy\'{a}rf\'{a}s' original paper (\cite{AG}) containing 44 problems. The relationship between Conjecture 6.3 and Problem 6.8 is not clear, and looks like neither implies the other.  

\end{section}

\begin{acknowledgements}
%I would like to thank CIRM and IAS for posting videos of Prof. Seymour talking about his proof (with A. Scott) of the $\chi$-boundedness of odd-hole-free graphs.
 %I learned the levelling argument and several other ideas about coloring from these videos.
  I would like to thank Paul Seymour for several inspiring discussions on chi-boundedness, and Maria Chudnovsky for telling me about weakly chordal graphs.
\end{acknowledgements}

%%%%%%%%%%%%%%%%%%%%%%%%
%%%%%%%%%%%%%%%%%%%%%%%%


\begin{thebibliography}{99}
\frenchspacing

\bibitem{ACHRS} L. Addario-Berry, M. Chudnovsky, F. Havet, B.Reed, P. Seymour, Bisimplicial vertices in even-hole-free graphs, J. Combin. Theory Ser B  \textbf{98}(6) (2008), 1119-1164. 


\bibitem{CS} M. Chudnovsky, P. Seymour, Excluding induced subgraphs, Surveys in Combinatorics 2007, London Math Soc Lecture Note Series, \textbf{346}, 99-119.

 \bibitem{MC} M. Chudnovsky, The Erd\H{o}s-Hajnal conjecture - A Survey, Journal of Graph Theory, \textbf{75} (2014), 178-190.


\bibitem{CCLSV} M. Chudnovsky, G. Cornuejols, X. Liu, K. Vuskovic and P. Seymour, Recognizing Berge graphs, Combinatorica, \textbf{25} (2005), 143-186.


\bibitem{CRST} M. Chudnovsky, N. Robertson, P. Seymour, R. Thomas, $K_4$-free graphs with no odd holes, J. Combin. Theory Ser B  \textbf{100}(3) (2010), 313-331. 




\bibitem{GAD} G. A. Dirac, On rigid circuit graphs, Abh. Math. Sem. Univ. Hamburg \textbf{25} (1961), 71--76.

\bibitem{EH} P. Erd\H{o}s, A. Hajnal, Ramsey-type theorems, Discrete Applied Mathematics \textbf{25} (1-2) (1989), 37-52,. 


\bibitem{AG} A. Gy\'{a}rf\'{a}s, Problems  from the world surrounding perfect graphs, Zastosowania Matematyki Applicationies Mathematicae XIX, 3-4 (1987), 413-441.

\bibitem{RBH} R. B. Hayward, Weakly triangulated graphs, J. Combin. Theory Ser. B \textbf{39}(3) (1985), 200--208.


\bibitem{HM} C. T. Hoang, C. McDiarmid, On the divisibility of graphs, Discrete Math.  242,  1?3 (2002), 145-156.

\bibitem{MP} F. Maffray, M. Preissmann, On the NP-completeness of the $k$-colorability problem for triangle-free graphs, Discrete Mathematics \textbf{162} (1996), 313-317.

\bibitem{BDM} B. D. McKay, Asymptotics for 0-1 matrices with prescribed line sums, in Enumeration and Design, (Academic Press, 1984) 225-238. 

\bibitem{SS1} A. Scott, P. Seymour, Induced subgraphs of graphs with large chromatic number. I. Odd holes, J. Combin. Theory Ser. B \textbf{121} (2016), 68-84.

\bibitem{SS7} A. Scott, P. Seymour, Induced subgraphs of graphs of large chromatic number. VII. Gy\'{a}rf\'{a}s' complementation conjecture.


\bibitem{NCW} N.C. Wormald. A simpler proof of the asymptotic formula for the number of unlabelled $r$-regular graphs. Indian J. Math., \textbf{28}, 43-47, 1986.


\begin{comment}
\bibitem{LWB} L. W. Beineke, Characterizations of derived graphs, J. Combin. Theory \textbf{9} (1970), 129--135.
\bibitem{BLJ} A. Brandst\"adt, V. B. Le, and J. P. Spinrad, Graph Classes: A Survey, SIAM Monographs Discrete Math. Appl., Society for Industrial and Applied Mathematics, Philadelphia, 1999. 
\bibitem{CRST} M. Chudnovsky, N. Robertson, P. Seymour, and R. Thomas, The strong perfect graph theorem, Ann. Math. (2) \textbf{164}(1) (2006), 51--229.
\bibitem{CH73} V. Chv\'{a}tal and P. L. Hammer, Set-packing and threshold graphs, Univ. Waterloo Res. Rep. CORR 73-21, 1973.
\bibitem{CH77} V. Chv\'{a}tal and P. L. Hammer, Aggregation of inequalities in integer programming, in Studies in Integer Programming (Proc. Workshop, Bonn, 1975), pp. 145--162. Ann. Discrete Math., Vol. 1, North-Holland, Amsterdam, 1977. 
\bibitem{RD} R. Diestel, Graph Theory, 4th ed., Grad. Texts in Math., Vol. {173}, Springer, Heidelberg, 2010.

\bibitem{CFGS} R. Faudree, E. Flandrin, and Z. Ryj\'a\v{c}ek, Claw-free graphs---a survey, Discrete Math. 164 (1997), 87--147.
\bibitem{FH} S. F\"{o}ldes and P.L. Hammer, Split graphs, Congr. Numer. \textbf{19} (1977), 311--315. 
\bibitem{MCG} M. C. Golumbic, Algorithmic Graph Theory and Perfect Graphs, Academic Press, New York, 1980. 2nd ed., Ann. Discrete Math., Vol. 57, Elsevier, 2004.
\bibitem{HP} F. Harary and U. Peled, Hamiltonian threshold graphs, Discrete Appl. Math. \textbf{16}(1) (1987), 11--15. 

\bibitem{LL} L. Lov\'{a}sz, A characterization of perfect graphs, J. Combin. Theory Ser. B \textbf{13} (1972), 95--98.
\bibitem{MP}{N. V. R. Mahadev and U. N. Peled, Threshold Graphs and Related Topics, Ann. Discrete Math., Vol. {56}, North-Holland, Amsterdam, 1995.}
\bibitem{RS} {E. P. Reilly and E. R. Scheinerman, Random threshold graphs, Electron. J. Comb. \textbf{16} (2009), article \#R130.}
\bibitem{DBW}  D. B. West, Introduction to Graph Theory, 2nd ed., Prentice Hall, Upper Saddle River, N.J., 2001.



\end{comment}
\end{thebibliography}
\end{document}